\newcommand{\mc}{\mathcal}
\newcommand{\msf}{\mathsf}
\newcommand{\C}{ \ensuremath{\mathbb{C}}}  
\DeclarePairedDelimiterX\set[1]{\lbrace}{\rbrace}{  #1 }  
\DeclarePairedDelimiterX\norm[1]{\lVert}{\rVert}{#1}  			
\DeclarePairedDelimiterX\inner[2]{(}{)}{#1 \,,\, #2}  	
\DeclarePairedDelimiterX\abs[1]{\lvert}{\rvert}{#1} 
\DeclareMathOperator{\spn}{span}
\newcommand{\red}[1]{{\color{red} #1}}
\newcommand{\koop}{\mathfrak{K}}
\newcommand{\A}{\mathbb{A}}
\newcommand{\ab}{\mathbf{a}}
\newcommand{\X}{\mathbf{X}}
\newcommand{\Y}{\mathbf{Y}}
\newcommand{\F}{\mathbf{F}}
\newcommand{\f}{\mathbf{f}}
\newcommand{\x}{\mathbf{x}}
\newcommand{\y}{\mathbf{y}}
\newcommand{\ii}{\mathfrak{i}}
\newcommand{\roff}{{\boldsymbol\varepsilon}}
\DeclareMathOperator*{\argmin}{arg\,min}
\newcommand\restr[2]{{
		\left.\kern-\nulldelimiterspace 
		#1 
		\vphantom{\big|} 
		\right|_{#2} 
	}}
\begin{document}


\title[Enhanced DMD]{Data driven modal decompositions: analysis and enhancements}

\author[Drma\v{c}, Mezi\'{c}, Mohr]{Zlatko Drma\v{c}\affil{1}\comma\corrauth, Igor Mezi\'{c}\affil{2}, Ryan Mohr\affil{3}}

\address{\affilnum{1}\ Faculty of Science, Department of Mathematics, University of Zagreb, Croatia.\\
\affilnum{2}\ Department of Mechanical Engineering and Mathematics, University of California, Santa Barbara, CA, 93106, USA; AIMdyn, Inc., Santa Barbara, CA 93101, USA.\\
\affilnum{3}\ AIMdyn, Inc., Santa Barbara, CA 93101, USA; Department of Mechanical Engineering, University of California, Santa Barbara, CA, 93106, USA.}

\emails{{\tt drmac@math.hr} (Z.\ Drma\v{c}), {\tt mezic@engineering.ucsb.edu} (I.\ Mezi\'{c}), {\tt mohrr@aimdyn.com} (R.\ Mohr)}
%

\begin{abstract}
The Dynamic Mode Decomposition (DMD) is a tool of trade in computational data driven analysis of fluid flows. 
More generally,  it is a computational device for Koopman spectral analysis of nonlinear dynamical systems, with a plethora of applications in applied sciences and engineering. 
Its exceptional performance triggered developments  of several modifications that  make the DMD an attractive method in data driven framework.
This work offers further improvements of the DMD to make it more reliable, and to enhance its functionality.  In particular, data driven formula for the residuals allows selection 
of the Ritz pairs, thus providing more precise spectral information of the underlying Koopman operator, and  the well-known technique of refining the Ritz vectors is adapted to data driven scenarios.
Further, the DMD is formulated in a more general setting of weighted inner product spaces, and the consequences for numerical computation are discussed in detail.
Numerical experiments are used to illustrate the advantages of the proposed method, designated as DDMD\_RRR (Refined Rayleigh Ritz Data Driven Modal Decomposition). 
\end{abstract}

\keywords{Dynamic Mode Decomposition, Koopman operator, Krylov subspaces, Proper Orthogonal Decomposition, Rayleigh-Ritz approximation,  weighted inner product}

\ams{15A12, 15A23, 65F35, 65L05, 65M20, 65M22, 93A15, 93A30, 93B18, 93B40, 93B60, 93C05, 93C10, 93C15, 93C20, 93C57}

\maketitle

\renewcommand{\thefootnote}{\arabic{footnote}}
\setcounter{footnote}{0}


\section{Introduction}
Dynamic Mode Decomposition (DMD) has become a major tool in the data-driven analysis of complex dynamical systems. DMD was first introduced in 2008 by P. Schmid \cite{Schmid:2008wv} for the study of fluid flows where it was conceptualized as an algorithm to decompose the flow field into component fluid structures, called ``dynamic modes'' or ``DMD modes'', that described the evolution of the flow. The method asserts the existence of a \emph{linear} operator that maps a collection of snapshots of the fluid flow forward one step in time \cite{Schmid:2008wv}. For a nonlinear evolution operator, such as the one generated by the Navier-Stokes equations, the proposed linear operator is equivalent to a linear tangent space approximation \cite{Schmid:2010ba}. The DMD modes and their temporal behavior are given by the spectral analysis of the linear operator, which is constructed from data since it is assumed that direct access to it is not available. Rowley et al.\ \cite{Rowley:2009ez} gave the method theoretical underpinnings by connecting it to the spectral analysis of the Koopman operator --- a linear operator that can be associated with any \emph{nonlinear} dynamical system --- which evolves observables of that system forward in time. The algorithm was cast as a Krylov subspace method in which the operator was represented as a companion matrix in the Krylov basis formed from the data snapshots.

Many variants of the basic algorithm have been introduced since then (see, for example, \cite{Chen:2012jh,Tu2014391,Williams:2015kh,Hemati:2014jm,Jovanovic:2012wy,Jovanovic:2014ft}) all purporting to more accurately, robustly, or efficiently compute the eigenvalues and modes under various assumptions on the data. However, deep numerical analyses giving some certificate of accuracy for these algorithms have been absent. This is especially troubling as the DMD method, in all of its various guises, has enjoyed large scale deployment in fields such as fluid dynamics (see \cite{Mezic:2013} and the references therein) where it is often taken as an ``off-the-shelf'' algorithm whose results are implicitly trusted. This is contrasted with the subset of practitioners who recognize that the method often produces spurious or inaccurate eigenvalues that are not associated with spectrum of the operator generating the data. This can even be true in the simplest case where the data snapshots are produced by powers of a matrix applied to an initial vector --  the standard Schmid-type DMD method can fail to accurately capture the spectrum of the matrix, even if the supplied data is rich in spectral information.

The detection of these spurious or inaccurate eigenvalues has been approached in an ad hoc manner. Eigenvalues are often ranked in decreasing importance by the $L^2$-norm (energy) of their associated mode and are deemed non-essential if the norm is sufficiently small. Recently, D. Giannakis has proposed using a different measure that imposes a penalty for the eigenvalues based on their mode's ``roughness'' {via their Dirichlet energy} \cite{Giannakis:2015wr}. This modification captures the physical reasoning that real systems are more likely to produce smooth modes, which is a conjecture that itself must be justified. 

Focusing on the magnitude of the energy, however, can lead to discarding physically relevant dynamics, especially if the high energy of the mode is an artifact of the units the data is reported in. For example, snapshots can be formed from data acquired via several different sensors, with each sensor reporting information in different units. From a scientist's perspective, there is no difference in reporting, say the power consumption of a system, in watts or milliwatts; both numbers represent the same physical quantity. Numerically, however, there can be a large difference. 
The situation is exacerbated further by data which contains measurements of quantities with fundamentally different physical nature.

Despite these concerns, DMD methods have demonstrated exceptional performance in many applications. However, this often requires deep, domain-specific knowledge to determine the reliability of the algorithm's output. The question of when it fails and how badly is still open. If the outputs of the algorithm are not known to be spurious or real, the inferences based on these outputs cannot be known to be reliable. Therefore, DMD should be analyzed in depth so that there are guarantees on the accuracy and reliability of the algorithm. Furthermore, this analysis should be divorced from domain-specific knowledge of the current application. This will not only reassure the algorithm's fitness for further nontrivial applications, moving it toward a true ``off-the-shelf'' method that a non-expert can apply to their particular problem, but also allow modifications that will improve its numerical reliability and robustness.

\subsection{Contributions and overview}
In this work, we excogitate ways to address the aforementioned issues with several modifications and enhancements of the DMD.
In \S \ref{zd:S=Intro-K+RR}, we set the stage and briefly review Krylov subspaces with the corresponding decomposition, and the Rayleigh-Ritz procedure for extracting spectral information from a given Krylov subspace. We briefly discuss how the Krylov subspaces naturally appear in spectral approximations of the Koopman operator, and we review the DMD algorithm. In \S \ref{S=New-DMD-RRR}, we first show how the DMD algorithm can be equipped with residual estimate that can be used to assess the quality of each particular Ritz pair. Further, we show how to apply the well known Ritz vector refinement technique to the DMD data driven setting, and we discuss the importance of data scaling. All these modifications are integrated in \S \ref{SS=DMD-RRR} where we propose a new version of the DMD, designated as DDMD\_RRR (Refined Rauleigh-Ritz Data Driven Modal Decomposition).  In \S\ref{zd:S=Examples} we provide numerical examples that show the benefits of our modifications, and we discuss the fine details of software implementation.  In \S \ref{zd:S=Exact-DMD} we use the Exact DMD \cite{Tu-DMD-Theory-Appl} to show that our modifications apply to other versions of DMD.
In \S\ref{zd:S=Compressed-DMD}, we provide a compressed form of the new DDMD\_RRR, designed to improve the computational efficiency in case of extremely large dimensions. A matrix-root-free modification of the Forward-Backward DMD \cite{Dawson2016} is presented in \S\ref{S=FB-DMD}.
The column scaling used in the new DDMD implementation in \S\ref{SS=DMD-RRR} is just a particular case of a more general weighting scheme that we address in \S\ref{zd:S=General-weighted}. Using the concept of the generalized SVD introduced by Van Loan \cite{van-Loan-GSVD}, we define weighted DDMD with the Hilbert space structures in the spaces of the snapshots and the observables (spatial and temporal) given by two positive definite matrices.

\section{DMD as data driven Krylov+Rayleigh-Ritz procedure}\label{zd:S=Intro-K+RR}
In the framework of dynamic mode decomposition and analysis, we are given e.g. the flow field data\footnote{In some applications the data can be complex.} $\f_1, \ldots, \f_{m+1}\in\mathbb{C}^n$, under the assumption that it has been generated by an unknown linear operator $\A$ such that $\f_{i+1}=\A \f_i$, $i=1,2,\ldots$. 
We can think of $\A$ as a discretization of the underlying physics that drives the measured $\f_i$'s.
In a pure data driven setting we have no access to $\A$. Instead, the $\f_i$'s are the results of measurements, e.g. computed from pixel values from a high speed camera recorded video, see e.g. \cite[\S 3.1]{Schmid2011}. 
 No other information on the action of $\A$ is available. 

In another scenario of data acquisition, $\A$ represents PDE/ODE solver (software toolbox) that generates solution in high resolution, with given initial condition $\f_1$. In such a framework, the discrete time evolution $\f_{i+1}=\A \f_i$ can be stopped at some (time) index $i$ and then restarted with new initial condition.
In both scenarios, $n$ is expected to be large, say $n>10^4$, and the number $m$ of snapshots is typically much smaller. 
The  goal is to extract useful spectral information on $\A$, based solely on these measurements and/or numerical simulation data. 

\subsection{{Connection with the Koopman operator}}\label{SS=DMD-Koopman-connection} 
The seemingly simple sequence of the $\f_i$'s, the result of the power method applied to $\A$, can be interpreted as a discretization of power iterations applied to a linearization of
complex nonlinear dynamics.
In analyzing a nonlinear dynamical system $T : M \to M$ there is an associated infinite-dimensional linear operator $\mc U : \mc H \to \mc H$ defined by the composition operation $\mc U \psi := \psi \circ T$, where $\mc H$ is a Hilbert space of functions on $M$ closed under composition with $T$. 
The spectral properties of this so-called Koopman operator are useful in the analysis, prediction, and control of the underlying nonlinear dynamical system \cite{MezicandBanaszuk:2004,Mezic:2005}. 

There are two essentially different types of appoximations of the Koopman operator that DMD techniques provide \cite{Mezic:201708.003}. The first one is related to the methodology introduced in \cite{Rowley:2009ez}, and is interpreted in \cite{Mezic:201708.003} as follows. Let $\mathcal{S}=\{\x_1,\ldots,\x_m\}$ be an invariant set for $T$. Consider the space  ${\mathcal{C}}|_\mathcal{S}$, of continuous functions in $\mc H$ restricted to $\mathcal{S}$.  This is an $m$-dimensional vector space. The restriction of the Koopman operator to ${\mathcal{C}}|_\mathcal{S}$, $\mc U|_\mathcal{S}$ is then a finite-dimensional linear operator that can be represented in a basis by an $m\times m$ matrix. An explicit example is given when $\x_j,j=1,\ldots, m$ represent successive points on a periodic trajectory, and the resulting matrix representation in the standard basis is the  $m\times m$ cyclic permutation matrix
\begin{equation}
{\displaystyle P={\left(\begin{smallmatrix}0&0&\ldots &0&1\\1&0&\ldots &0&0\\0&\ddots &\ddots &\vdots &\vdots \\\vdots &\ddots &\ddots &0&0\\0&\ldots &0&1&0\end{smallmatrix}\right)}}. 
\end{equation}

If $\mathcal{S}$ is not an invariant set, an $m\times m$ approximation of the reduced Koopman operator can still be provided. Namely, if we know $m$ independent functions' restrictions $(f_j)|_\mathcal{S}$, $j=1,\ldots,m$ in  ${\mathcal{C}}|_\mathcal{S},$ and we also know $f_j(T\x_k)$, $j,k\in\{1,\ldots,m\}$, we can provide a matrix representation of $\mc U|_\mathcal{S}$. However, while in the case  where $\mathcal{S}$ is an invariant set, the iterate of any function in ${\mathcal{C}}|_\mathcal{S}$ can be obtained in terms of the iterate of $m$ independent functions, for the case when $\mathcal{S}$ is not invariant this is not necessarily so. Namely, the fact that $T$ is not invariant  means that functions in ${\mathcal{C}}|_\mathcal{S}$ do not necessarily experience linear dynamics under $\mc U|_\mathcal{S}$. However, one can take $n$ observables $f_j$, $j=1,\ldots,n$, where $n>m$, and approximate the nonlinear dynamics using linear regression on $\f(\x)\equiv (\f(\x_1),\ldots,\f(\x_m)),$ where $\f(\cdot)=(f_1(\cdot),\ldots,f_n(\cdot))^{\msf T}$ -- i.e by finding an $m\times m$ matrix $C$ that gives the best approximation of the data in the Frobenius norm, 
\begin{equation}
C =\underset{B\in\C^{m\times m}}\argmin ||\f(T\x)- \f(\x) {B}||_F \equiv \underset{B\in\C^{m\times m}}\argmin \| (f_j(T\x_k))_{j,k=1,1}^{n,m} -  (f_j(\x_k))_{j,k=1,1}^{n,m} {B}\|_F.
\end{equation}
{Under certain conditions this approximation converges weakly to the Koopman operator on an invariant set  that the $\x_j$'s are densely distributed on, see \cite{Mezic:201708.003}.}
 
DMD algorithms and the spectral analysis of the Koopman operator can also be connected by considering finite sections of the matrix associated with the operator \cite{Mezic:201708.003}. Let $( \phi_1, \phi_2, \dots ) \subset \mc H$ be a (not necessarily orthogonal) basis for $\mc H$ and $(\hat\phi_1, \hat\phi_2, \dots ) \subset \mc H$ the dual basis satisfying {$\inner{\phi_i}{\hat\phi_j} = \delta_{ij}$}.\footnote{{Following standard math notation (as opposed to physics notation), our inner product is \emph{linear in the first variable and conjugate linear in the second}.}} Let $\mc H_n = \spn(\phi_1,\dots, \phi_n)$ and $P_n : \mc H \to \mc H_n$ be the orthogonal projection onto $\mc H_n$. We consider a compression of the operator $\mc U_n := P_n \restr{\mc U}{\mc H_n} : \mc H_n \to \mc H_n$ and find its matrix representation $\A \in \C^{n\times n}$ in the basis $(\phi_1,\dots, \phi_n)$. We first note that $P_n \equiv \Phi_n \hat\Phi_n$ where $\Phi_n : \C^{n} \to \mc H_n$ and $\hat\Phi_n : \mc H \to \C^n$ are given, respectively, by
\begin{align}
&\Phi_{n}((c_1,\dots,c_n)^{\msf T}) = \sum_{k=1}^{n} c_k \phi_k, &  
&{\hat\Phi_n \psi = (\inner{\psi}{\hat\phi_1}, \dots, \inner{\psi}{\hat\phi_n})^{\msf T}}. 
\end{align}
The matrix $\A$ has elements defined as {$\A_{ij} = \inner{\mc U\phi_j}{\hat\phi_i}$} for $1\leq i,j \leq n$ and it can be checked that $\mc U_n = \Phi_n \A \hat\Phi_n$. Since $\hat\Phi_n \Phi_n = I_{\C^n}$, we have the identity $\mc U_{n}^i = \Phi_n \A^i \hat\Phi_n$ for all $i\geq 0$. Now, fixing a function $\psi \in \mc H_n$ and evolving it with the compression $\mc U_n$ gives $\mc U_n^i \psi = \Phi_n \A^i \hat\Phi_n \psi$ for $i\geq 0$. If we define $\f_1 := \hat\Phi_n \psi$ and $\f_{i+1} := \hat\Phi_n \mc U_n^i \psi$ for $i\geq 0$, then we have that $\f_{i+1} = \A^i \f_1$ from the identity $\hat\Phi_n\mc U_{n}^i = \A^i \hat\Phi_n$. The data sequence $(\f_1, \f_2, \dots )$ represents the evolution of the function $\psi \in \mc H$ due to the nonlinear dynamics $T$ in the coordinates given by the basis $(\phi_1,\dots, \phi_n)$. This representation is amenable to the DMD algorithms we discuss in this paper.

The computed eigenvalues and eigenvectors (eigenmodes) of $\A$ are the key ingredients of the Dynamic Mode Decomposition (DMD), introduced by Schmid \cite{schmid2010}.  Schmid's algorithm is widely used and it has become one of the tools of trade in analysis of fluid flows.
One of its features, stressed both in applications and the development of \emph{Schmid type} DMD methods is the low dimensional approximation of the data using the Singular Value Decomposition (SVD).

\begin{remark}\label{zd:REM:errors}
Note that DMD produces approximate eigenpairs of $\A$ with an error (that depends on the details of a particular implementation), and that the overall error with respect to some eigenvalues of $\mc U$ (part of its point spectrum) depends on the discretization, i.e. on the choice of the finite dimensional subspace $\mc H_n$. In this work, we do not consider the discretization error. 
\end{remark}

\subsection{Preliminaries}
To set the stage, introduce notation and for the reader's convenience, we briefly review some basic facts about Krylov subspaces in eigenvalue computations, and on SVD based low rank approximation. For more details and deeper insights we refer to \cite{Stewart:Book}, \cite{Watkins:Book}, \cite{Liesen-Strakos:Book}.
 
\subsubsection{Krylov decomposition}\label{SSS=Krylov-decomposition}
For $i=1,2,\ldots, m$, define the Krylov matrices 
\begin{equation}\label{zd:eq:XY}
\X_i = \begin{pmatrix} \f_1 & \f_2 & \ldots & \f_{i-1} & \f_i\end{pmatrix},\;\;
\Y_i = \begin{pmatrix} \f_2 & \f_3 & \ldots & \f_{i} & \f_{i+1}\end{pmatrix} \equiv \A \X_i,
\end{equation}
and the corresponding Krylov subspaces $\mathcal{X}_i=\mathrm{range}(\X_i)\subset\mathbb{C}^n$. From the assumption $n\gg m$, $\X_i$ and $\Y_i$ are tall and skinny matrices. The space $\mathbb{C}^n$ is endowed with the complex Euclidean structure; the inner product is $(x,y)=y^* x$, the corresponding norm is $\|x\|_2=\sqrt{(x,x)}$, and the induced matrix (operator) norm is $\|\A\|_2=\max_{\|x\|_2=1}{\|\A x\|_2}$.  The orthogonal projection onto the subspace $\mathcal{X}_i$ is denoted by $\mathbf{P}_{\mathcal{X}_i}$.

Assume that at the index $m$,  $\X_m$ is of full column rank. This implies that 
 \begin{equation}\label{zd:eq:X-flag}
 \mathcal{X}_1\varsubsetneq \mathcal{X}_2\varsubsetneq \cdots\varsubsetneq
 \mathcal{X}_i \varsubsetneq
 \mathcal{X}_{i+1}\varsubsetneq\cdots \varsubsetneq \mathcal{X}_m \varsubsetneq\cdots \varsubsetneq \mathcal{X}_{\ell}=\mathcal{X}_{\ell+1},
 \;\; \A\mathcal{X}_{\ell}\subseteq \mathcal{X}_{\ell},
 \end{equation}
 i.e. $\mathrm{dim}(\mathcal{X}_i)=i$ for $i=1,\ldots, m$, and there must be the smallest saturation index $\ell$ at which $\mathcal{X}_{\ell}=\mathcal{X}_{\ell+1}$. It is well known that then $\mathcal{X}_{\ell}$ is the smallest $\A$-invariant subspace that contains $\f_1$. 

Obviously, with given $\f_1, \ldots, \f_{m+1}$, the action of $\A$ on the range $\mathcal{X}_m$ of $\X_m$ is known, as $\A (\X_m v) = \Y_m v$ for any $v\in\mathbb{C}^m$. Hence, useful spectral information can be obtained using the computable restriction $\mathbf{P}_{\mathcal{X}_m} \restr{\A}{\mathcal{X}_m}$, that is, the Ritz values and vectors extracted using the Rayleigh quotient of $\A$ with respect to $\mathcal{X}_m$.

To that end, let the vector $c=(c_i)_{i=1}^m$ be  computed from the least squares approximation,
	\begin{equation}\label{zd:eq:r=x-Kc}
	{c = \argmin_{v\in\C^m} \| \f_{m+1} - \X_m v\|_2 },
	\end{equation}
and let $r_{m+1} = \f_{m+1} - \X_m c$ be the corresponding residual. Recall that, by virtue of the {projection theorem},  $\X_m c = \mathbf{P}_{\mathcal{X}_m} \f_{m+1}$ and that $r_{m+1}$ is orthogonal to the range of $\X_m$, $\X_m^* r_{m+1}=0$.
Then, since $\f_{i+1}=\A \f_{i}$, $i=1,\ldots, m$, and $\f_{m+1} = \X_m c + r_{m+1}$, we have the Krylov decomposition
\begin{equation}\label{zd:eq:AK=KC+R}
\A \X_m = \X_m C_m + E_{m+1},\;\;  C_m = \begin{pmatrix} 0 & 0 & \ldots & 0 & c_1 \cr
1 & 0 & \ldots & 0 & c_2 \cr 
0 & 1 & \ldots & 0 & c_3 \cr
\vdots & \ddots & \ddots & \vdots & \vdots \cr
0 & 0 & \ldots & 1 & c_{m}\end{pmatrix},\;\;E_{m+1} = r_{m+1} e_m^T ,\;\; e_m=\begin{pmatrix} 0 \cr 0  \cr \vdots \cr 0 \cr 1\end{pmatrix} .
\end{equation}
{Clearly, $C_m=\arg\min_{B\in\C^{m\times m}} \| \A\X_m - \X_m B\|_F$; cf. \S \ref{SS=DMD-Koopman-connection}.}

\noindent The following theorem summarizes well known facts on spectral approximation from $\mathcal{X}_m$:

\begin{theorem}\label{zd:TM:KRR-summary} 
Assume (\ref{zd:eq:XY}), (\ref{zd:eq:X-flag}), (\ref{zd:eq:r=x-Kc}), (\ref{zd:eq:AK=KC+R}), and let $\X_m$ be of full column rank. Then 
\begin{enumerate}
\item The companion matrix 
$C_m=(\X_m^* \X_m)^{-1}(\X_m^* \A \X_m)\equiv \X_m^{\dagger} \A \X_m = (\X_m^* \X_m)^{-1}(\X_m^* \Y_{m})$
is the Rayleigh quotient, i.e. the matrix representation of $\mathbf{P}_{\mathcal{X}_m} \restr{\A}{\mathcal{X}_m}$ 
in the Krylov basis $\X_m$ of $\mathcal{X}_m$. Here $\X_m^\dagger$ denotes the Moore-Penrose generalized inverse of $\X_m$.
\item If $r_{m+1}=0$ (and thus $E_{m+1}=0$ and $m=\ell$ in (\ref{zd:eq:X-flag})) then $\A \X_m = \X_m C_m$ and each eigenpair $C_m w= \lambda w$ of $C_m$ yields an eigenpair of $\A$, $\A(\X_m w) = \lambda (\X_m w)$. 
\item If $r_{m+1}\neq 0$, then $(\lambda, z\equiv \X_m w)$ is an approximate eigenpair,
$\A (\X_m w) = \lambda (\X_m w) + r_{m+1} (e_m^T w)$, {i.e.} $\A z = \lambda z + r_{m+1} (e_m^T w)$.
The Ritz pair $(\lambda, z)$ is acceptable if the residual 
\begin{equation}\label{zd:eq:residual}
\frac{\| \A z - \lambda z\|_2}{\|z\|_2} = \frac{\|r_{m+1}\|_2}{\|z\|_2} |e_m^T w|
\end{equation}
is sufficiently small.
It holds that $z^* r_{m+1}=0$, and 
$\lambda$ is the optimal choice that minimizes the residual (\ref{zd:eq:residual}), i.e.
\begin{equation}\label{eq:optimal-Ritz-value}
\lambda=\frac{z^* \A z}{z^*z}=\argmin_{\zeta\in\mathbb{C}} \| \A z - \zeta z\|_2  
\end{equation}
($\lambda z$ is the orthogonal projection of $\A z$ onto the span of $z$).
\item The residual can be pushed in the backward error, thus making the current Krylov subspace an exactly invariant subspace of a perturbed initial matrix $\A$:
$$
(\A+\Delta \A) \X_m = \X_m C_m,\;\;\mbox{where}\;\; \Delta \A = -r_{m+1} e_m^T(\X_m^* \X_m)^{-1}\X_m^* .
$$
Hence, if $C_m w= \lambda w$, then $(\lambda, \X_m w)$ is an exact eigenpair of $\A+\Delta \A$.
\item If $\A$ is diagonalizable\footnote{In the case of nontrivial Jordan structure of  (non-diagonalizable) $\A$, one can use the theory from \cite{ERXIONG1994691}.} with the eigenvalues $\alpha_1, \ldots, \alpha_n$ and the eigenvector matrix $S$, then for each eigenvalue $\lambda$ of $C_m$,
$
\min_{\alpha_i} |\lambda - \alpha_i| \leq \kappa_2(S) \| \Delta \A\|_2 
$
(Bauer--Fike theorem \cite{Bauer-Fike-1960}) and 
$
\min_{\alpha_i} {|\lambda - \alpha_i|}/{|\lambda|} \leq \kappa_2(S) \| \A^{-1}\Delta \A\|_2
$
(Eisenstat--Ipsen \cite{eis-ipsen-1998}). Here $\kappa_2(S)=\|S\|_2 \|S^{-1}\|_2$, and $\kappa_2(S)=1$ if $\A$ is normal.
\end{enumerate}
\end{theorem}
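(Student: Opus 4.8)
The plan is to derive every item from the single Krylov identity (\ref{zd:eq:AK=KC+R}), $\A\X_m = \X_m C_m + r_{m+1}e_m^T$, together with the defining orthogonality $\X_m^* r_{m+1}=0$ of the least-squares residual (\ref{zd:eq:r=x-Kc}). For item (1), I would left-multiply the Krylov identity by $\X_m^*$. The residual term drops because $\X_m^* r_{m+1}=0$, leaving $\X_m^*\A\X_m = (\X_m^*\X_m)C_m$; since $\X_m$ has full column rank, $\X_m^*\X_m$ is invertible and $C_m=(\X_m^*\X_m)^{-1}\X_m^*\A\X_m$. Recognizing $(\X_m^*\X_m)^{-1}\X_m^* = \X_m^{\dagger}$ and $\A\X_m=\Y_m$ yields the three displayed forms. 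The Rayleigh-quotient claim then follows by checking that $\mathbf{P}_{\mathcal{X}_m}=\X_m\X_m^{\dagger}$ and that, in the basis $\X_m$, the compression $\mathbf{P}_{\mathcal{X}_m}\restr{\A}{\mathcal{X}_m}$ sends coordinates $v$ to $\X_m^{\dagger}\mathbf{P}_{\mathcal{X}_m}\A\X_m v = \X_m^{\dagger}\A\X_m v = C_m v$, using $\X_m^{\dagger}\mathbf{P}_{\mathcal{X}_m}=\X_m^{\dagger}$.

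Items (2) and (3) I would obtain by right-multiplying the Krylov identity by an eigenvector $w$ of $C_m$, $C_m w=\lambda w$, and setting $z=\X_m w$. This yields $\A z = \lambda z + r_{m+1}(e_m^T w)$ at once; when $r_{m+1}=0$ this is an exact eigenpair (item 2), and otherwise the residual norm (\ref{zd:eq:residual}) is read off immediately from $\A z-\lambda z = r_{m+1}(e_m^T w)$. Orthogonality $z^* r_{m+1}=w^*\X_m^* r_{m+1}=0$ is inherited from $z\in\mathcal{X}_m$. For the optimality of $\lambda$, the least-squares projection of $\A z$ onto $\spn(z)$ gives $\argmin_\zeta\|\A z-\zeta z\|_2 = z^*\A z/z^*z$; substituting $\A z=\lambda z+r_{m+1}(e_m^T w)$ and using $z^*r_{m+1}=0$ shows this value equals $\lambda$, so the Ritz value is exactly the minimizer.

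For item (4) I would simply verify the proposed $\Delta\A = -r_{m+1}e_m^T(\X_m^*\X_m)^{-1}\X_m^*$: since $(\X_m^*\X_m)^{-1}\X_m^*\X_m=I$, one gets $\Delta\A\,\X_m=-r_{m+1}e_m^T$, so $(\A+\Delta\A)\X_m = \A\X_m - r_{m+1}e_m^T=\X_m C_m$ by the Krylov identity; the exact-eigenpair statement then follows as in item (2) applied to $\A+\Delta\A$. Finally, item (5) is a direct application of the cited Bauer--Fike and Eisenstat--Ipsen bounds to the eigenvalue $\lambda$ of the perturbed matrix $\A+\Delta\A$ produced in item (4), with $\|\Delta\A\|_2$ controlled by $\|r_{m+1}\|_2\,\|\X_m^{\dagger}\|_2$.

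The arguments are entirely standard; the only points demanding care are the precise identification of the coordinate map $\X_m^{\dagger}$ with the Rayleigh quotient in item (1), and confirming in item (3) that the variational minimizer coincides with the algebraic Ritz value $\lambda$ rather than merely bounding it. There is no genuine analytic obstacle, so the ``hard part'' is only bookkeeping: keeping the projection $\mathbf{P}_{\mathcal{X}_m}$, its generalized inverse $\X_m^{\dagger}$, and the full-column-rank hypothesis consistently in play throughout.
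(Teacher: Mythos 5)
Your proof is correct. The paper states Theorem \ref{zd:TM:KRR-summary} without proof, presenting it as a summary of well-known facts, and your derivation---obtaining every item from the Krylov identity (\ref{zd:eq:AK=KC+R}) together with the residual orthogonality $\X_m^* r_{m+1}=0$ guaranteed by the projection theorem, then verifying the backward-error matrix $\Delta\A$ directly and invoking Bauer--Fike and Eisenstat--Ipsen on the perturbed matrix---is precisely the standard argument built on the ingredients the paper sets up in \S \ref{SSS=Krylov-decomposition}.
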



\begin{remark}\label{zd:Remark-POWIT}
The matrix $\X_m$ can be nearly rank defficient. To illustrate, assume that $\A$ is diagonalizable with eigenpairs $\A \ab_i = \alpha_i \ab_i$, and that its eigenvalues $\alpha_i$ are enumerated so that $0\neq |\alpha_1|\geq |\alpha_2| \geq \cdots\geq |\alpha_n|$. Let $\f_1$ be expressed in the eigenvector basis as $\f_1 =\phi_1 \ab_1 + \cdots + \phi_n \ab_n$. Then
$
\f_{i+1} = \A^i \f_1 = \alpha_1^i \left(\phi_1 \ab_1 + \left({\alpha_2}/{\alpha_1}\right)^i \phi_2 \ab_2 + \left({\alpha_3}/{\alpha_1}\right)^i \phi_3 \ab_3 + \cdots + \left({\alpha_n}/{\alpha_1}\right)^i \phi_n \ab_n \right).
$	
Hence, if e.g. $|\alpha_2| > |\alpha_3|$, then for $j\geq 3$, $\lim_{i\rightarrow\infty}(\alpha_j/\alpha_1)^i = 0$, and thus, with big enough $i$ the $\f_i$'s will stay close to the span of $\ab_1$ and $\ab_2$, provided that $\phi_1\neq 0$, $\phi_2\neq 0$. This means that relatively small changes of $\X_m$ can make it rank deficient; its range may change considerably under tiny  perturbations. In the context of spectral approximations, this is desirable and we hope that the $\f_i$'s will become numerically linearly dependent as soon as possible; on the other hand we must stay vigilant in computing with $\X_m$ and $\Y_m$ as numerical detection of rank deficiency in the presence of noise is a delicate issue. Further, from Theorem \ref{zd:TM:KRR-summary} one can clearly see the advantage of replacing $\X_m$ with an orthonormal matrix, i.e. executing the Rayleigh--Ritz procedure in orthonormal basis.
\end{remark}

\begin{remark}
Clearly, if the subspace $\mathcal{X}_m$ determined as the span of the given dataset does not contain
information on a desired part of the spectrum, then we cannot expect any method to provide detailed insight
into the spectral properties of $\A$. On the other hand, if it does, then we must deploy many different techniques 
to extract relevant spectral information. Any so devised method, in order to be used with confidence, must be accompanied with an error estimate.
\end{remark}

\newpage
\subsubsection{Condition number, SVD and low rank approximations}\label{SSS=SVD-low-rank-approx}
The ill-conditioning of $\X_m$ will pose difficulties. Recall, the spectral condition number of $\X_m$ is defined as
\begin{equation}\label{zd:eq:kappa(X)}
\kappa_2(\X_m) = \|\X_m\|_2 \|\X_m^\dagger\|_2 = 
\frac{\sigma_1}{\sigma_m} ,
\end{equation}
where $\sigma_1\geq\cdots\geq\sigma_m\geq 0$ are the singular values of $\X_m$. High condition number implies closeness to rank deficiency, which is nicely expressed in the following classical theorem.

\begin{theorem}[Eckart-Young \cite{Eckart-Young-1936}, Mirsky \cite{Mirsky-1960}]\label{zd:TM:SVD-EYM}
Let the SVD of $M\in\mathbb{C}^{n\times m}$ be 
$$
M = U\Sigma V^*,\;\;\Sigma = \mathrm{diag}(\sigma_i)_{i=1}^{\min(m,n)},\;\;\sigma_1\geq\cdots\geq\sigma_{\min(m,n)}\geq 0 .
$$	
For $k\in\{ 1, \ldots, \min(m,n)\}$, define $U_k = U(:,1:k)$, $\Sigma_k=\Sigma(1:k,1:k)$, $V_k=V(:,1:k)$, and $M_k=U_k\Sigma_k V_k^*$.	The optimal rank $k$ approximations in $\|\cdot\|_2$ and the Frobenius norm $\|\cdot\|_F$ are
	\begin{eqnarray}
			\min_{\mathrm{rank}(N)\leq k} \| M - N \|_2 &=& \| M - M_k\|_2 = \sigma_{k+1} \label{zd:eq:EYM:2}\\
		\min_{\mathrm{rank}(N)\leq k} \| M - N \|_F &=& \| M - M_k\|_F = \sqrt{\sum_{i=k+1}^{\min(n,m)} \sigma_i^2}  . \label{zd:eq:EYM:F}
	\end{eqnarray}
\end{theorem}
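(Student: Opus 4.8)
The plan is to establish each optimality statement by separately proving achievability (the upper bound) and optimality (the matching lower bound over all matrices of rank at most $k$). Achievability is a routine computation: the difference $M - M_k$ equals $U\widetilde\Sigma V^*$, where $\widetilde\Sigma = \diag(0,\ldots,0,\sigma_{k+1},\ldots,\sigma_{\min(n,m)})$ is obtained from $\Sigma$ by zeroing its leading $k$ diagonal entries. Since both $\|\cdot\|_2$ and $\|\cdot\|_F$ are unitarily invariant, $\|M-M_k\|_2 = \|\widetilde\Sigma\|_2 = \sigma_{k+1}$ and $\|M-M_k\|_F = \|\widetilde\Sigma\|_F = \sqrt{\sum_{i=k+1}^{\min(n,m)}\sigma_i^2}$, so the stated values are attained by $M_k$.

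For the spectral-norm lower bound I would use a dimension-counting argument. Let $N \in \C^{n\times m}$ be arbitrary with $\rank(N)\le k$, so that $\dim(\ker N) \ge m-k$. The span of the leading $k+1$ right singular vectors $v_1,\ldots,v_{k+1}$ has dimension $k+1$, and since $(m-k)+(k+1) > m$, these two subspaces of $\C^m$ meet in a nonzero vector, which I normalize to a unit vector $w = \sum_{i=1}^{k+1}\xi_i v_i$ with $\sum_{i=1}^{k+1}|\xi_i|^2 = 1$. Using $Nw = 0$ and $Mw = \sum_{i=1}^{k+1}\xi_i\sigma_i u_i$ together with the orthonormality of the $u_i$, I obtain
$$
\|M-N\|_2^2 \ge \|(M-N)w\|_2^2 = \|Mw\|_2^2 = \sum_{i=1}^{k+1}|\xi_i|^2\sigma_i^2 \ge \sigma_{k+1}^2 ,
$$
hence $\|M-N\|_2 \ge \sigma_{k+1}$, which matches the achievable value.

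For the Frobenius-norm lower bound the cleanest route is Weyl's monotonicity inequality for singular values, $\sigma_{i+j-1}(A+B)\le\sigma_i(A)+\sigma_j(B)$. Applying it with $A = M-N$, $B = N$, and $j = k+1$, and using $\sigma_{k+1}(N)=0$ (since $\rank(N)\le k$), gives $\sigma_{i+k}(M)\le\sigma_i(M-N)$ for every $i\ge 1$. Summing the squares then yields
$$
\|M-N\|_F^2 = \sum_{i\ge 1}\sigma_i(M-N)^2 \ge \sum_{i\ge 1}\sigma_{i+k}(M)^2 = \sum_{i=k+1}^{\min(n,m)}\sigma_i^2 ,
$$
which is exactly the claimed bound. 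Note that the same inequality with $i=1$ re-proves the spectral case, so one could in principle unify both parts through Weyl. The main obstacle is precisely this lower bound: one must certify that \emph{no} rank-$k$ matrix beats $M_k$, and the intersection/dimension-counting argument (equivalently, Weyl's inequality) is the crux, whereas the achievability computations are immediate consequences of unitary invariance.
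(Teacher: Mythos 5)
Your proof is correct. Note that the paper itself gives no proof of this statement: it is quoted as a classical result, with the reader pointed to the original papers of Eckart--Young and Mirsky, so there is no internal argument to compare yours against. What you wrote is the standard modern proof, and all three pieces are sound: achievability follows from unitary invariance exactly as you say; the spectral-norm lower bound via intersecting $\ker N$ (dimension $\geq m-k$) with $\mathrm{span}(v_1,\dots,v_{k+1})$ is the classical dimension-counting argument; and the Frobenius-norm lower bound via Weyl's inequality $\sigma_{i+k}(M) \leq \sigma_i(M-N) + \sigma_{k+1}(N) = \sigma_i(M-N)$ is a clean and correct route, with the nice economy you point out that $i=1$ recovers the spectral case. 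Two small remarks. First, when $k=\min(m,n)$ the quantity $\sigma_{k+1}$ in the statement should be read as $0$ (and your intersection argument, which needs $k+1$ singular vectors, degenerates there); this edge case is harmless but worth a sentence. Second, Mirsky's original theorem is stronger than what you prove: it gives optimality of $M_k$ in \emph{every} unitarily invariant norm simultaneously, via majorization of singular values, whereas your Weyl-based argument is tailored to the Frobenius norm. For the two norms actually asserted in the statement, your proof is complete.
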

\noindent Hence, if $\sigma_m \ll \sigma_1$, the condition number (\ref{zd:eq:kappa(X)}) is large, $\X_m$ can be made singular with a perturbation $\delta \X_m$ such that $\|\delta \X_m\|_2/\|\X_m\|_2 = \sigma_m/\sigma_1 = 1/\kappa_2(\X_m)\ll 1$. 


This very clearly stresses the importance of the numerical issues, from the purely theoretical questions in perturbation theory to the practical software implementations and computations in the machine finite precision arithmetic.

\subsection{Schmid's DMD method}\label{S=Schmid-method}

The coefficient matrix $\X_m$ in the {least squares} problem (\ref{zd:eq:r=x-Kc}) may be highly ill-conditioned,\footnote{{This is possible even if the underlying $\A$ is unitary.}} and even when the QR factorization $\X_m = Q_m R_m$ is available, it is in general ill-advised to compute $c$ as $c=R_m^{-1}Q_m^* \f_m$, or $C_m$ using the formula from {item} 1.\ in Theorem \ref{zd:TM:KRR-summary} as $C_m = \X_m^\dagger \Y_m= R_m^{-1}Q_m^* \Y_m$ as it has been done e.g. in \cite[Algorithm 1]{Schmid2011}.


To avoid the ill-conditioning, Schmid \cite{schmid2010} used the thin truncated SVD $\X_m = U\Sigma V^* \approx U_k \Sigma_k V_k^*$, where $U_k=U(:,1:k)$ is $n\times k$ orthonormal ($U_k^* U_k=I_k$), $V_k=V(:,1:k)$ is $m\times k$, also orthonormal ($V_k^* V_k=I_k$), and $\Sigma_k=\mathrm{diag}(\sigma_i)_{i=1}^k$ contains the largest $k$ singular values of $\X_m$. 
In brief, $U_k$ is the POD basis for the snapshots $\f_1,\ldots, \f_{m}$. 
Since 
\begin{equation}\label{zd:eq:A*SVDX}
\Y_{m} = \A \X_m \approx \A U_k\Sigma_k V_k^* ,\;\;\mbox{and}\;\; \A U_k = \Y_m V_k \Sigma_k^{-1}, 
\end{equation}
the Rayleigh quotient ${S}_k = U_k^* \A U_k$ with respect to the range of $U_k$ can be computed as
\begin{equation}\label{zd:eq:Schmid-S}
S_k = U_k^* \Y_{m}V_k\Sigma_k^{-1} ,
\end{equation}
which is suitable for data driven setting because it does not use $\A$ explicitly. Clearly, (\ref{zd:eq:A*SVDX}, \ref{zd:eq:Schmid-S}) only require that 
$\Y_m = \A \X_m$; it is not necessary that $\Y_m$ is shifted $\X_m$ as in (\ref{zd:eq:XY}). 
Each eigenpair $(\lambda, w)$ of $S_k$ generates the corresponding Ritz pair $(\lambda, U_k w)$ for $\A$. 
This is the essence of the Schmid's method \cite{schmid2010}, summarized in  Algorithm \ref{zd:ALG:DMD} below.

\begin{algorithm}[hbt]
	\caption{{$[Z_k, \Lambda_k]=\mathrm{DMD}(\X_m,\Y_m)$}}
	\label{zd:ALG:DMD}
	\begin{algorithmic}[1]
		\REQUIRE \  \\		
		\begin{itemize} 
			\item $\X_m=(\x_1,\ldots,\x_m), \Y_m=(\y_1,\ldots,\y_m)\in \mathbb{C}^{n\times m}$ that define a sequence of snapshots pairs $(\x_i,\y_i\equiv \A \x_i)$. (Tacit assumption is that $n$ is large and that $m \ll n$.)
		\end{itemize}
		\STATE $[U,\Sigma, V]=svd(\X_m)$ ; \COMMENT{\emph{The thin SVD: $\X_m = U \Sigma V^*$, $U\in\mathbb{C}^{n\times m}$, $\Sigma=\mathrm{diag}(\sigma_i)_{i=1}^m$, $V\in\mathbb{C}^{m\times m}$}}
		\STATE Determine numerical rank $k$.
		\STATE Set $U_k=U(:,1:k)$, $V_k=V(:,1:k)$, $\Sigma_k=\Sigma(1:k,1:k)$ 	
		\STATE ${S}_k = (({U}_k^* \Y_m) V_k)\Sigma_k^{-1}$; \COMMENT{\emph{Schmid's formula for the Rayleigh quotient $U_k^* \A U_k$}}
		\STATE $[W_k, \Lambda_k] = \mathrm{eig}(S_k)$ \COMMENT{$\Lambda_k=\mathrm{diag}(\lambda_i)_{i=1}^k$; $S_k W_k(:,i)=\lambda_i W_k(:,i)$; $\|W_k(:,i)\|_2=1$}
		\STATE $Z_k = U_k W_k$ \COMMENT{\emph{Ritz vectors}}
		\ENSURE $Z_k$, $\Lambda_k$
	\end{algorithmic}
\end{algorithm}

\section{New approach to computing the DMD}\label{S=New-DMD-RRR}
		
Our goal is to devise a robust software tool for DMD, that will be capable of producing reliable results even in cases where the data and the output vary over several orders of magnitude. To that end, we first review some details from Algorithm \ref{zd:ALG:DMD}, and then we propose some improvements. In particular, we enhance the algorithm with a computable residual error bound, as well as with refinement of the Ritz pairs. These techniques are well known in the projection based large scale eigenvalue computation, and we just adapt them to the data driven framework. Finally we propose certain scalings of the data.
 
\subsection{Preliminaries}
For the sake of completeness and for the reader's convenience, we recall some well-known facts and provide few technical details on the structure of the DMD algorithm. 
\subsubsection{Choosing the dimension of the POD basis}\label{SSS=Choose-k}

The range of the POD basis $U_k$, among all $k$-dimensional spaces, best captures the snapshots in the least squares sense. Namely, if $W$ is any $n\times k$ matrix with orthonormal columns ($W^* W=I_k$) then,  based on Theorem \ref{zd:TM:SVD-EYM},  
\begin{eqnarray*}
	\sum_{i=1}^m \| \f_i - WW^* \f_i \|_2^2 &=& \|\X_m - W(W^*\X_m)\|_F^2\geq  \;\;\mbox{(since $\mathrm{rank}(WW^*\X_m)\leq k$)}\\ &\geq&  \|\X_m - U_k\Sigma_k V_k^*\|_F^2 = \|\X_m - U_kU_k^*\X_m\|_F^2 = 
	\sum_{i=1}^m \| \f_i - U_k U_k^* \f_i \|_2^2 
	= \sum_{i=k+1}^m \sigma_i^2.
\end{eqnarray*}	
This is, of course, the PCA \cite{PCA} of the data. It is interesting to note that this optimal subspace may not contain any of the $\f_i$'s.

The value of $k$, representing the numerical rank of $\X_m$ is determined by inspecting the singular values $\sigma_1\geq\cdots\geq\sigma_m\geq 0$ of $\X_m$ 
and determining $k$ as the largest index such that $\sigma_k > \sigma_1 \epsilon$, i.e.
\begin{equation}\label{zd:eq:k}
k = \max\{ i \; : \; \sigma_i > \epsilon \sigma_1  \},
\end{equation}
where $\epsilon\in (0,1)$ is user supplied tolerance. This is justified by (\ref{zd:eq:EYM:2}) in Theorem \ref{zd:TM:SVD-EYM}. Alternatively, we can use (\ref{zd:eq:EYM:F}) and define
$k = \max\{ i \; : \; \sum_{j=i}^m\sigma_j^2 > \epsilon^2 \sum_{j=1}^m\sigma_j^2  \}$. See \cite{Golub:1976:RDL:892104} for an in depth analysis.

Choosing an appropriate threshold $\epsilon$ is nontrivial in the case of noisy data, and it requires additional case-by-case basis information; see e.g. an analysis in the case of particle image velocimetry data \cite{Epps2010}. For more details see e.g.  \cite[\S 8.2]{dmd-book-kutz-2016}, \cite{Dawson2016}. In this paper we do not consider those issues and focus to the computational aspects of the DMD (see \S \ref{zd:S=General-weighted}), assuming  that the threshold $\epsilon$ (or some other strategy for choosing $k$) is given. 

\subsubsection{Structure of the Rayleigh quotient}
The following proposition explains the relation between $C_m$ and $S_k$, and reveals the details behind the formula for $S_k$.

\begin{proposition}
	Let $S_k$ be computed as in Algorithm \ref{zd:ALG:DMD}, with $\X_m$ and $\Y_m$ as in (\ref{zd:eq:XY}). If $k=m$, then $S_m$ and $C_m$ are similar matrices, where the similarity is realized by the matrix $V\Sigma^{-1}$. 
	If $k<m$, then $S_k$ is the Rayleigh quotient of $C_m$, with the matrix  $V_k\Sigma_k^{-1}$.  
\end{proposition}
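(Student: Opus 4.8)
The plan is to express everything through the thin SVD $\X_m = U\Sigma V^*$ and reduce both assertions to clean block-matrix bookkeeping. The starting point is to rewrite the companion/Rayleigh-quotient matrix $C_m = \X_m^\dagger \Y_m$ from item~1 of Theorem~\ref{zd:TM:KRR-summary} in SVD form. Since $U^* U = I_m$ and full column rank guarantees $\Sigma$ is invertible (with $V$ unitary), a short computation gives $\X_m^* \X_m = V\Sigma^2 V^*$ and hence $\X_m^\dagger = (\X_m^*\X_m)^{-1}\X_m^* = V\Sigma^{-1}U^*$. Thus $C_m = V\Sigma^{-1}U^* \Y_m$, an identity valid for the full problem irrespective of any later truncation.

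For the case $k=m$, I would set $P = V\Sigma^{-1}$, note it is invertible with $P^{-1} = \Sigma V^*$ (using $V^{-1}=V^*$), and simply verify $P^{-1}C_m P = S_m$. Substituting $C_m = V\Sigma^{-1}U^*\Y_m$ and $S_m = U^*\Y_m V\Sigma^{-1}$, the factor $V^* V = I_m$ annihilates the leading $\Sigma\Sigma^{-1}$, leaving exactly $U^*\Y_m V\Sigma^{-1} = S_m$. This establishes similarity with transition matrix $V\Sigma^{-1}$, as claimed.

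For $k<m$ the object is no longer square: $G := V_k\Sigma_k^{-1}$ is $m\times k$, so the relevant quantity is the (oblique) Rayleigh quotient $G^\dagger C_m G$, consistent with the $M^\dagger(\cdot)M$ convention of Theorem~\ref{zd:TM:KRR-summary}. I would first compute $G^\dagger = (G^*G)^{-1}G^* = \Sigma_k V_k^*$, using $V_k^* V_k = I_k$ to get $G^* G = \Sigma_k^{-2}$. The key structural fact is that orthonormality of the columns of $V$ yields the rectangular selection $V_k^* V = [\,I_k\;\;0\,]$, so $V_k^* V\Sigma^{-1} = [\,\Sigma_k^{-1}\;\;0\,]$ and therefore $\Sigma_k V_k^* V\Sigma^{-1} = [\,I_k\;\;0\,]$. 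Applying the matching identity on the left, $[\,I_k\;\;0\,]U^* = U_k^*$, collapses $G^\dagger C_m G = \Sigma_k V_k^*\,(V\Sigma^{-1}U^*\Y_m)\,V_k\Sigma_k^{-1}$ down to $U_k^*\Y_m V_k\Sigma_k^{-1} = S_k$, which is precisely Schmid's formula.

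The computation is essentially routine once $C_m$ is written through the SVD; the only point demanding genuine care is the interaction of truncation with the full-rank inverse $\Sigma^{-1}$, i.e.\ the bookkeeping with $V_k^* V = [\,I_k\;\;0\,]$ and $[\,I_k\;\;0\,]U^* = U_k^*$, since it is exactly this that makes the off-diagonal blocks vanish and reproduces $S_k$ rather than some mixture of singular directions. I would close by observing that in the $k=m$ case the oblique Rayleigh-quotient formula specializes to $G^\dagger C_m G = G^{-1}C_m G$, so both claims are two faces of the single identity $S_k = (V_k\Sigma_k^{-1})^\dagger C_m (V_k\Sigma_k^{-1})$.
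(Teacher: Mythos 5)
Your proof is correct, but it takes a genuinely different route from the paper. The paper never forms $\X_m^\dagger$ explicitly: it substitutes the (truncated) SVD $\X_m = U_k\Sigma_k V_k^* + \delta\X_m$ directly into the Krylov decomposition $\A\X_m = \X_m C_m + r_{m+1}e_m^T$ of (\ref{zd:eq:AK=KC+R}), and then kills the extra terms using the orthogonality relations $U_k^*r_{m+1}=0$, $U_k^*\delta\X_m=0$, $\delta\X_m V_k=0$, arriving at $S_k = U_k^*\A U_k = \Sigma_k V_k^* C_m V_k\Sigma_k^{-1}$. You instead start from the closed-form $C_m = \X_m^\dagger\Y_m = V\Sigma^{-1}U^*\Y_m$ (item~1 of Theorem~\ref{zd:TM:KRR-summary}) and reduce everything to block bookkeeping with $V_k^*V = \begin{pmatrix} I_k & 0\end{pmatrix}$, which lets you treat both cases as the single identity $S_k = (V_k\Sigma_k^{-1})^\dagger C_m (V_k\Sigma_k^{-1})$; note that this quietly outsources the residual orthogonality $\X_m^* r_{m+1}=0$ to Theorem~\ref{zd:TM:KRR-summary}, whereas the paper handles it in-line. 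Your version is shorter and more unified; what the paper's longer route buys is the intermediate relation (\ref{eq:zd:Krylov-dec-Uk-perturbed}), i.e.\ the perturbed Krylov decomposition $\A U_k = U_k S_k + r_{m+1}e_m^TV_k\Sigma_k^{-1} + \delta\X_m C_m V_k\Sigma_k^{-1}$, which the subsequent remark reuses to bound the truncation term $G_k$ and to justify the data-driven formula $\A U_k = \Y_m V_k\Sigma_k^{-1}$ for $k<m$ (a fact Schmid derived only for $k=m$); your argument, working directly from Schmid's formula for $S_k$, does not produce this byproduct. One presentational caveat: the proposition's phrase ``Rayleigh quotient of $C_m$ with the matrix $V_k\Sigma_k^{-1}$'' is exactly your oblique interpretation $G^\dagger C_m G$, so your reading is faithful, but it is worth stating explicitly (as you do) that $(V_k\Sigma_k^{-1})^\dagger = \Sigma_k V_k^*$, since that is what reconciles your formula with the paper's $\Sigma_k V_k^* C_m V_k\Sigma_k^{-1}$.
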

\begin{proof}
If $k=m$, then (\ref{zd:eq:AK=KC+R}) yields
$$
\A U_m\Sigma_m V_m^* = U_m\Sigma_m V_m^* C_m + r_{m+1}e_m^T \Longrightarrow S_m\equiv  U_m^*\A U_m = \Sigma_m V_m^* C_m V_m\Sigma_m^{-1} \equiv \Sigma V^* C_m V\Sigma^{-1} .
$$ 
In other words, the Rayleigh quotient is computed using another basis for $\mathcal{X}_m$, necessarily yielding a similar matrix. {A} similar observation in \cite{Jovanovic-Schmid-Nichols-2012} is justified only for the full rank case $k=m$. 

On the other hand, if 
$k<m$ then $\X_m = U_k \Sigma_k V_k^* + \delta \X_m$ where $U_k\Sigma_k V_k^*$ is the best rank $k$ approximation (in the sense of Theorem \ref{zd:TM:SVD-EYM}) of $\X_m$ and $\delta\X_m = \sum_{i=k+1}^m \sigma_i U(:,i)V(:,i)^*$. Hence, $\|\delta \X_m\|_2 = \sigma_{k+1}$ and $U_k^*\delta \X_m=0$, $\delta \X_m V_k=0$. (Note that this implies $\A U_k=\Y_m V_k\Sigma_k^{-1}$ as in (\ref{zd:eq:A*SVDX}). In fact, the formula 
(\ref{zd:eq:A*SVDX}) is in \cite{schmid2010}{, but} derived only for $k=m$. Here we see that its validity for $k<m$ is based on these orthogonality relations\footnote{In finite precision computation this orthogonality is only numerical, i.e. up to rounding errors that depend on a particuar algorithm.} between the truncated part $\delta\X_m$ and the leading left and right singular vectors of $\X_m$. Also note that $U_m^* r_{m+1}=0$ and $(\delta \X_m)^* r_{m+1}=0$.)
In terms of this low rank approximation of $\X_m$,
relation (\ref{zd:eq:AK=KC+R}) reads
$$
\A (U_k \Sigma_k V_k^* + \delta \X_m) = (U_k \Sigma_k V_k^* + \delta \X_m)C_m + r_{m+1}e_m^T,
$$
i.e.
$
\A (U_k \Sigma_k V_k^*) = (U_k \Sigma_k V_k^*) C_m + r_{m+1}e_m^T + \delta \X_m C_m - \A \delta\X_m,
$
and thus (since $\delta \X_m V_k=0$)
\begin{equation}\label{eq:zd:Krylov-dec-Uk-perturbed}
\A U_k = U_k (\Sigma_k V_k^* C_m V_k \Sigma_k^{-1}) + r_{m+1} e_m^T V_k \Sigma_k^{-1} + \delta \X_m C_m V_k\Sigma_k^{-1}.
\end{equation}
In this case, $S_k = U_k^* \A U_k = \Sigma_k V_k^* C_m V_k \Sigma_k^{-1} $ is a Rayleigh quotient of $C_m$.
\end{proof}

\begin{remark}
Note that in relation (\ref{eq:zd:Krylov-dec-Uk-perturbed})
\begin{equation}\label{zd:eq:Schmid-decomp}
\A U_k = U_k S_k + r_{m+1} g_k^* + G_k,\;\;g_k^* =  e_m^T V_k \Sigma_k^{-1},\;\; G_k = \delta \X_m C_m V_k\Sigma_k^{-1} ,
\end{equation}
where
$\|G_k\|_2 = \| \delta \X_m C_m V_k\Sigma_k^{-1}\|_2 \leq \|C_m\|_2 \frac{\sigma_{k+1}}{\sigma_k}$.
This means that neglecting $G_k$ and using the approximate Krylov decomposition\footnote{See \cite{Stewart:KrylovSchur}.}
$\A U_k \approx U_k S_k + r_{m+1} g_k^*$ 
is acceptable only if the singular values are distributed so that
$\sigma_1 \geq \sigma_2 \geq\cdots\geq\sigma_k \gg \sigma_{k+1}\geq\cdots\geq\sigma_m$,
i.e. only in the case of sharp drop after the index $k$. This much stronger truncation criterion is not taken into account in (\ref{zd:eq:k}).
\end{remark}

\subsection{Residual estimates  -- data driven residual computation}\label{zd:SSS:reziduals}

Not all computed Ritz pairs will provide good approximations of eigenpairs of the underlying $\A$, and it is desirable that each pair is accompanied with an error estimate that will determine whether it can be accepted and used in the next steps of a concrete application. The residual is computationally feasible and usually reliable measure of fitness of a Ritz pair. 
 With a simple modification, Algorithm \ref{zd:ALG:DMD} can be enhanced with residual computation, without using $\A$ explicitly. 
 \begin{proposition}\label{zd:PROP:DMD-comp-residual}
For the Ritz pairs $(\lambda_i, Z_k(:,i)\equiv U_k W_k(:,i))$, $i=1,\ldots, k$, computed in Algorithm \ref{zd:ALG:DMD}, the residual norms can be computed as follows:
\begin{equation}\label{zd:eq:DMD-comp-residual}
r_k(i) = \| \A (U_k W_k(:,i)) - \lambda_i (U_k W_k(:,i))\|_2 =
\| (\Y_m V_k\Sigma_k^{-1}) W_k(:,i) - \lambda_i Z_k(:,i) \|_2 .
\end{equation}
Further, if  $\A=S\mathrm{diag}(\alpha_i)_{i=1}^n S^{-1}$, then 
$\min_{\alpha_j} |\lambda_i - \alpha_j| \leq \kappa_2(S) r_k(i)$ (see Theorem \ref{zd:TM:KRR-summary}).
\end{proposition}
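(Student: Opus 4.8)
The plan is to treat the two assertions separately, since the first is a computational identity and the second a perturbation bound. For the residual formula, the key ingredient is the exact relation $\A U_k = \Y_m V_k \Sigma_k^{-1}$ already recorded in (\ref{zd:eq:A*SVDX}). First I would recall why this holds \emph{exactly}, and not merely approximately: writing $\X_m = U_k\Sigma_k V_k^* + \delta\X_m$ with $\delta\X_m V_k = 0$, and using $\Y_m = \A\X_m$ from (\ref{zd:eq:XY}), one gets $\Y_m V_k = \A\X_m V_k = \A U_k \Sigma_k$, so that $\A U_k = \Y_m V_k \Sigma_k^{-1}$ with no truncation error. Crucially, the right-hand side involves only the data $\Y_m$ and the SVD factors of $\X_m$, never $\A$ itself, which is exactly what makes the residual computable in the data-driven setting.

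With this in hand, the residual formula is pure substitution. Since $Z_k(:,i) = U_k W_k(:,i)$, I would compute $\A Z_k(:,i) = (\A U_k) W_k(:,i) = (\Y_m V_k \Sigma_k^{-1}) W_k(:,i)$, subtract $\lambda_i Z_k(:,i)$, and take the Euclidean norm to obtain (\ref{zd:eq:DMD-comp-residual}). No genuine obstacle arises here; the only point worth stressing is the exactness of $\A U_k = \Y_m V_k \Sigma_k^{-1}$ for $k<m$, which rests on the orthogonality $\delta\X_m V_k = 0$ established in the preceding proposition.

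For the perturbation bound I would invoke the residual form of the Bauer--Fike theorem, exactly as in item 5 of Theorem \ref{zd:TM:KRR-summary}. First observe that $z := Z_k(:,i)$ is a unit vector: since $U_k^* U_k = I_k$ and $\|W_k(:,i)\|_2 = 1$ (as produced in Algorithm \ref{zd:ALG:DMD}), one has $\|z\|_2 = \|W_k(:,i)\|_2 = 1$, so $r_k(i)$ is literally the residual norm $\|\A z - \lambda_i z\|_2$. Assuming $\lambda_i$ is not itself an eigenvalue of $\A$ (otherwise the bound is trivial), I would write $z = (\A - \lambda_i I)^{-1}(\A z - \lambda_i z)$, so that $1 = \|z\|_2 \leq \|(\A - \lambda_i I)^{-1}\|_2\, r_k(i)$. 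Diagonalizing $\A = S\,\mathrm{diag}(\alpha_j)\,S^{-1}$ gives $(\A - \lambda_i I)^{-1} = S\,\mathrm{diag}((\alpha_j - \lambda_i)^{-1})\,S^{-1}$, whence $\|(\A - \lambda_i I)^{-1}\|_2 \leq \kappa_2(S)/\min_j|\alpha_j - \lambda_i|$. Rearranging yields $\min_j |\lambda_i - \alpha_j| \leq \kappa_2(S)\, r_k(i)$.

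The calculations are routine; the main thing to get right is the logical dependence on the earlier results — specifically that (\ref{zd:eq:A*SVDX}) is an exact identity under $\delta\X_m V_k = 0$, and that the unit-norm normalization of $W_k(:,i)$ carries over to $Z_k(:,i)$ so that the quantity entering Bauer--Fike is genuinely the residual norm. I do not anticipate any real difficulty beyond this bookkeeping.
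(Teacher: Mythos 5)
Your proof is correct and takes essentially the same route as the paper, which leaves the proposition's proof implicit: the exact data-driven identity $\A U_k = \Y_m V_k\Sigma_k^{-1}$ (resting on $\delta\X_m V_k=0$, established in the preceding proposition on the structure of the Rayleigh quotient), substitution into the residual, and the Bauer--Fike bound cited from Theorem \ref{zd:TM:KRR-summary}. Your direct resolvent argument $1=\|z\|_2\leq\|(\A-\lambda_i I)^{-1}\|_2\, r_k(i)$ is just the standard proof of the residual form of Bauer--Fike that the paper invokes by reference, and your verification that $\|Z_k(:,i)\|_2=1$ correctly supplies the normalization the bound requires.
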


Since the formula (\ref{zd:eq:DMD-comp-residual}) requires the matrix $ \A U_k \equiv B_k\equiv \Y_m V_k\Sigma_k^{-1}$, the order of computation must be changed to compute (and store for later use) $B_k$ at the cost of $2nmk + mk$ flops; then, computing $S_k = U_k^* B_k$ takes additional $2nk^2$ flops.
On the other hand, the flop count of the computation of $S_k$ in line 4 {in Algorithm \ref{zd:ALG:DMD}}, as indicated by the parentheses, can be estimated at $2nmk+2mk^2+k^2$ operations. Note that computing  $Z_k$ in line 6  takes $2nk^2$ flops.
Hence, this additional computation of the residuals only mildly increases the overall complexity, but we consider this information an important part of the output and thus the overhead it incurs as justifiable.

In a data driven setting, the right hand side in (\ref{zd:eq:DMD-comp-residual}) is the best one can do. Unfortunately, in finite precision computation, the formula uses computed quantities and it may fail. We discuss this problem and how to fix it in \S \ref{SS=Scaling}, \S \ref{SS=DMD-RRR}, \S \ref{zd:SSS=Synth-Ex-1.2}, \S \ref{SSS=Discussion}. 

\subsection{Data driven refinement of Ritz vectors}\label{zd:SS=refined-vectors}
It is known that the Ritz vectors are not optimal eigenvectors approximations from a given subspace $\mathcal{U}_k=\mathrm{range}(U_k)$. Hence, for a computed Ritz value $\lambda$, instead of the associated Ritz vector, we can choose a vector $z$ that minimizes the residual (\ref{zd:eq:residual}). From the variational characterization of the singular values \cite[Theorem 3.1.2]{horn-johnson-TMA-91}, it follows that
\begin{equation}\label{zd:eq:refined-Ritz}
\min_{z\in\mathcal{U}_k\setminus\{0\}} \frac{\| \A z - \lambda z\|_2}{\|z\|_2} = \min_{w\neq 0} \frac{\| \A U_k w - \lambda U_k w\|_2}{\|U_k {w}\|_2} = \min_{\|w\|_2=1}\| (\A U_k - \lambda U_k)w\|_2 = \sigma_{\min}(\A U_k - \lambda U_k),
\end{equation}
where $\sigma_{\min}(\cdot)$ denotes the smallest singular value of a matrix, and the minimum is attained at the right singular vector $w_{\lambda}$ corresponding to $\sigma_{\lambda}\equiv \sigma_{\min}(\A U_k - \lambda U_k)$. As a result, the refined Ritz vector corresponding to $\lambda$ is $U_k w_{\lambda}$ and the optimal residual is $\sigma_{\lambda}$. For a thorough analysis of the refined Ritz vectors and convergence issues we refer the reader to \cite{Jia19971}, \cite{Jia1999191}, \cite{JIA2001813}, \cite{Jia-Stewart-2001}, \cite{Jia-Sci-China-2004}. 

Here, we need to adapt the refinement procedure to the data driven framework. Using (\ref{zd:eq:A*SVDX}), the minimization of the residual (\ref{zd:eq:refined-Ritz}) can be replaced with computing the smallest singular value with the corresponding right singular vector of $ B_k- \lambda U_k$, where $B_k=\Y_m V_k\Sigma_k^{-1}$. 

Since  (\ref{zd:eq:refined-Ritz}) requires singular value and vector computation of an $n\times k$ matrix for $\lambda=\lambda_i$, $i=1,\ldots, k$, refined Ritz vectors come with an additional computational cost. It can be alleviated using the following preprocessing that replaces the dimension $n$ with much smaller value $2k$:\footnote{Recall that $k\leq m \ll n$.} 

Compute the QR factorization
\begin{equation}\label{zd:eq:refine-QR}
\begin{pmatrix} U_k & B_k \end{pmatrix} = Q R,\;\; R = \bordermatrix{~ & k & k \cr
	k & R_{[11]} & R_{[12]} \cr
	k' & 0 & R_{[22]} \cr} ,\;\;k'=\min(n-k,k), 
\end{equation}
and write the pencil $B_k - \lambda U_k$ as 
\begin{equation}\label{zd:eq:shifted-QR}
B_k - \lambda U_k = Q \left( \begin{pmatrix} R_{[12]} \cr R_{[22]}\end{pmatrix} - \lambda \begin{pmatrix} R_{[11]} \cr 0 \end{pmatrix}\right) \equiv Q R_{\lambda},\;\;\;R_{\lambda} = \begin{pmatrix} R_{[12]} - \lambda R_{[11]} \cr R_{[22]} \end{pmatrix}.
\end{equation}
Obviously, the problem (\ref{zd:eq:refined-Ritz}) is reduced to computing the smallest singular value and the corresponding right singular vector of the $2k\times k$ matrix $R_{\lambda}$.
\begin{proposition}\label{zd:PROP:refined-residual}
For the Ritz value $\lambda=\lambda_i$, let $w_{\lambda_i}$ denote the right singular vector of the smallest singular value $\sigma_{\lambda_i}$ of the matrix $R_{\lambda_i}$ defined using  (\ref{zd:eq:shifted-QR}) and (\ref{zd:eq:refine-QR}). Then the vector $z=z_{\lambda_i}\equiv U_k w_{\lambda_i}$ minimizes the residual (\ref{zd:eq:refined-Ritz}), whose minimal value equals $\sigma_{\lambda_i}=\|R_{\lambda_i}w_{\lambda_i}\|_2$.
\end{proposition}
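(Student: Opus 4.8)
The plan is to reduce the original $n\times k$ singular-value minimization in (\ref{zd:eq:refined-Ritz}) to the much smaller $2k\times k$ problem for $R_\lambda$, and to verify that this reduction preserves \emph{both} the singular values and the right singular vectors. First I would recall, via (\ref{zd:eq:A*SVDX}), that $\A U_k = B_k$, so that the refined residual equals $\sigma_{\min}(\A U_k - \lambda U_k) = \sigma_{\min}(B_k - \lambda U_k)$, with minimizer $z = U_k w$, where $w$ is the right singular vector of the smallest singular value of $B_k - \lambda U_k$. This is exactly the content already extracted from the variational characterization of singular values preceding the statement.

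Next I would invoke the QR factorization (\ref{zd:eq:refine-QR}) to write $B_k - \lambda U_k = Q R_\lambda$ as in (\ref{zd:eq:shifted-QR}). The key point is that $Q$ has orthonormal columns, $Q^* Q = I$, so $Q$ acts as a Euclidean isometry: for every $w\in\C^k$,
\begin{equation*}
\|(B_k - \lambda U_k)w\|_2 = \|Q R_\lambda w\|_2 = \|R_\lambda w\|_2.
\end{equation*}
Consequently $B_k - \lambda U_k$ and $R_\lambda$ share the same singular values and right singular vectors; explicitly, if $R_\lambda = \widehat U\widehat\Sigma\widehat V^*$ is an SVD of $R_\lambda$, then $B_k - \lambda U_k = (Q\widehat U)\widehat\Sigma\widehat V^*$ is an SVD of $B_k - \lambda U_k$, since $Q\widehat U$ again has orthonormal columns, $(Q\widehat U)^*(Q\widehat U) = \widehat U^* Q^* Q\widehat U = I$.

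Applying this with $\lambda = \lambda_i$, the smallest singular value $\sigma_{\lambda_i} = \sigma_{\min}(R_{\lambda_i})$ coincides with $\sigma_{\min}(B_k - \lambda_i U_k)$, and its right singular vector $w_{\lambda_i}$ is exactly the minimizing vector for $B_k - \lambda_i U_k$. Substituting back into the reduction of the first paragraph shows that $z_{\lambda_i} = U_k w_{\lambda_i}$ attains the minimum in (\ref{zd:eq:refined-Ritz}), with minimal value $\sigma_{\lambda_i} = \|R_{\lambda_i}w_{\lambda_i}\|_2$, as claimed. Every step is an elementary computation, so there is no genuine obstacle; the only point requiring care is the bookkeeping of the block partition of $R$, ensuring that the columns of $Q$ acting on $U_k$ (which give the top block $R_{[11]}$ with zero beneath) and on $B_k$ (which give $R_{[12]}$ over $R_{[22]}$) align so that the difference produces precisely the top block $R_{[12]} - \lambda R_{[11]}$ and the unchanged lower block $R_{[22]}$ in $R_\lambda$.
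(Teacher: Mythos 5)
Your proposal is correct and follows essentially the same route as the paper: the paper's (largely implicit) justification is exactly the reduction via (\ref{zd:eq:A*SVDX}) to $\sigma_{\min}(B_k-\lambda U_k)$ followed by the QR factorization (\ref{zd:eq:refine-QR})--(\ref{zd:eq:shifted-QR}), with the isometry $Q^*Q=I$ ensuring that $B_k-\lambda U_k=QR_\lambda$ and $R_\lambda$ share singular values and right singular vectors. You have merely made explicit the step the paper dismisses with ``obviously,'' which is a faithful filling-in rather than a different argument.
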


 Since $k$ is usually small, the cost of preforming this computation with $R_{\lambda_i}$, $i=1,\ldots, k$, is almost negligible in comparison to the overall cost (see \S \ref{zd:SSS:reziduals}). {In an application, we will typically  refine only some of the Ritz pairs, selected by the computed residual and/or applying particular criteria on the Ritz values (e.g. largest real parts, or in absolute value greater than one etc.) and the Ritz vectors (e.g. roughness).}  The matrix $Q$ in (\ref{zd:eq:refine-QR}) is not needed and the computation of $R$ takes 
$ 8 n k^2 - 16 k^3/3$ flops if we take no advantage of the fact that the columns of $U_k$ are already orthonormal.\footnote{Here, for the sake of brevity, we skip technical details on efficient software implementation. They will be available in the accompanying blueprints of the upcoming software.}

The overhead incurred by the QR factorization (\ref{zd:eq:refine-QR}) can be reduced using an interesting and practically useful relation, revealed in the following proposition.
\begin{proposition}\label{zd:prop:S=R22}
In the QR factorization (\ref{zd:eq:refine-QR}), define $\Phi=\mathrm{diag}((R_{[11]})_{ii})_{i=1}^k$. Then
\begin{equation}
\Phi^* R_{[12]} = U_k^* B_k \equiv S_k.
\end{equation}	
\end{proposition}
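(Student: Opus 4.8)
The plan is to exploit the fact that the leading block of columns fed into the QR factorization (\ref{zd:eq:refine-QR}) already consists of the orthonormal columns of $U_k$, which forces the diagonal block $R_{[11]}$ to be a \emph{diagonal} (indeed unitary diagonal) matrix rather than merely upper triangular. First I would partition $Q = \begin{pmatrix} Q_1 & Q_2 \end{pmatrix}$ conformally with $R$, so that $Q_1 \in \C^{n\times k}$, $Q_2 \in \C^{n\times k'}$, and $Q^* Q = I$. Reading off the two block-columns of (\ref{zd:eq:refine-QR}) then gives
\begin{equation*}
U_k = Q_1 R_{[11]}, \qquad B_k = Q_1 R_{[12]} + Q_2 R_{[22]}.
\end{equation*}

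Next I would establish that $R_{[11]} = \Phi$ is unitary diagonal. Since $Q_1$ has orthonormal columns, $I_k = U_k^* U_k = R_{[11]}^* Q_1^* Q_1 R_{[11]} = R_{[11]}^* R_{[11]}$, so $R_{[11]}$ is unitary; being also upper triangular, it must be diagonal (the first column has unit norm so $|(R_{[11]})_{11}|=1$, whence orthogonality with the later columns forces the remaining first-row entries to vanish, and induction on the trailing principal submatrix finishes the argument). Hence $R_{[11]} = \Phi$ with $|\Phi_{ii}| = 1$, so that $\Phi$ is invertible with $\Phi^{-1} = \Phi^*$.

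From $U_k = Q_1 \Phi$ I then obtain $Q_1 = U_k \Phi^*$, and therefore $U_k^* Q_1 = U_k^* U_k \Phi^* = \Phi^*$. The orthogonality $Q_1^* Q_2 = 0$ combined with $Q_1^* = \Phi U_k^*$ yields $\Phi\, U_k^* Q_2 = 0$, and since $\Phi$ is invertible, $U_k^* Q_2 = 0$. Substituting the expression for $B_k$ and recalling that $S_k = U_k^* B_k$ (the data-driven Rayleigh quotient, $B_k \equiv \A U_k = \Y_m V_k\Sigma_k^{-1}$),
\begin{equation*}
S_k = U_k^* B_k = U_k^* Q_1 R_{[12]} + U_k^* Q_2 R_{[22]} = \Phi^* R_{[12]},
\end{equation*}
which is the claimed identity.

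The only delicate point is the structural claim that $R_{[11]}$ is diagonal and not merely triangular; this is exactly where the orthonormality of the columns of $U_k$ (inherited from the thin SVD) enters, pinning the diagonal entries of $R_{[11]}$ to unit modulus. Everything else is an elementary block manipulation once the conformal partition of $Q$ is in place. (In finite-precision arithmetic the identity holds up to rounding, but for the exact statement the argument is entirely straightforward.)
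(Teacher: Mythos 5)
Your proof is correct and follows essentially the same route as the paper's: partition $Q=\begin{pmatrix}Q_1 & Q_2\end{pmatrix}$, observe that $U_k = Q_1 R_{[11]}$ is a QR factorization of a matrix with orthonormal columns so that $R_{[11]}=\Phi$ is diagonal unitary, and then read off $S_k = U_k^* B_k = \Phi^* R_{[12]}$ using the orthogonality between the blocks. The only difference is cosmetic: you prove the diagonal-unitary structure of $R_{[11]}$ directly by the triangular-unitary induction argument, whereas the paper invokes the essential uniqueness of the QR factorization; both are valid.
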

\begin{proof} If we partition the unitary matrix $Q$ in (\ref{zd:eq:refine-QR}) as $Q=\begin{pmatrix} Q_1, & Q_2\end{pmatrix}$, where $Q_1$ has $k$ columns, then $U_k = Q_1 R_{[11]}$ -- this is a QR factorization of $U_k$. Since $U_k^* U_k=I_k$, and since the QR factorization is essentially unique, $\Phi\equiv R_{[11]}$ is {a} diagonal unitary matrix. The unique QR factorization of $U_k$,  $U_k = U_k I_k = (Q_1 \Phi)(\Phi^* R_{[11]})=(Q_1 \Phi) I_k$, yields $U_k = Q_1 \Phi$. Hence
$
U_k^* B_k = \Phi^* Q_1^* B_k = \Phi^* R_{[12]} .
$
\end{proof}
\begin{remark}
The benefit of Proposition \ref{zd:prop:S=R22} is that it saves $2nk^2$ flops needed to compute $S_k$; instead, at most $k^2$ complex sign changes in $R_{[12]}$ are needed.
Namely, in the numerical software (e.g. Matlab, LAPACK) the QR factorization is implemented so that the diagonal entries of $R$ are real even for complex input matrices. Hence, in practical computation the matrix $\Phi$ is $\mathrm{diag}(\pm 1)_{i=1}^k$.	
\end{remark}


\subsection{Preprocessing raw data: scaling}\label{SS=Scaling}

In Algorithm \ref{zd:ALG:DMD}, the information carried by the $\f_i$'s that are of very small norm (i.e. much smaller than $\max_i \|\f_i\|_2$) is suppressed by truncating small singular values. {Unless the underlying $\A$ is unitary}, we may expect that the column norms of $\X_m$ vary over several orders of magnitude, 
implying large spectral condition number $\kappa_2(\X_m)\equiv \sigma_1 / \sigma_m$. If $\|\f_i\|_2$, $i=1,2,\ldots, m$, e.g. decay very rapidly, then Algorithm \ref{zd:ALG:DMD} will tend to use small $k$. This may be undesirable in case when small norm of $\f_i$ is in the nature of the information it carries, and perhaps just a consequence of a sudden change/switching in the underlying dynamics, and not because it is just a noise. Also, the data $\X_m$, $\Y_m$ can be aggregated from several experiments under different conditions.

As another example, consider computing empirical Gramians of an LTI dynamical systems. Tu and Rowley \cite{Tu20125317} use a combination of spectral projection onto the subspace of slow eigenvalues and an analytical treatment of impulse response tails; DMD is used to identify slow eigenvalues and the snapshots naturally appear as scaled by the weights from the quadrature formula, see \cite[\S 3.2]{Tu20125317}. 

In other words, the subspace $\mathcal{X}_m$ may contain valuable spectral information that will not be captured by the low rank approximation $U_k$ as described in \S \ref{SSS=SVD-low-rank-approx} and \S \ref{SSS=Choose-k}. In that case no method can extract any useful approximation from $U_k$ -- it is simply gone with the truncation. We stress here an important fact: numerical rank determination is not a simple mechanical procedure of truncating the SVD, and, depending on concrete situation, additional information and appropriate procedure may yield better results. 

On the other hand, in the Rayleigh-Ritz procedure, it is the subspace contents that matters and we can additionally process the data
to facilitate better numerical robustness as long we do not change the underlying subspaces.  One simple operation is scaling. Note that $\Y_m = \A \X_m$ is equivalent to $(\Y_m D) = \A (\X_m D)$ for any nonsingular (diagonal) $m\times m$ matrix $D$. Although all these representations of the input-output relations are mathematically equivalent, representing the same physics, they may have different numerical consequences.

Let $\X_m$ be of full column rank, and let $D_x=\mathrm{diag}(\|\X_m(:,i)\|_2)_{i=1}^m$, and let $\X_m = \X_m^{(1)} D_x$. Set $\Y_m = \Y_m^{(1)} D_x$.
Similarly define $D_y=\mathrm{diag}(\|\Y_m(:,i)\|_2)_{i=1}^m$, and set $\Y_m=\Y_m^{(2)} D_y$, $\X_m=\X_m^{(2)}D_y$. Note that $\X_m^{(1)}$ and $\Y_m^{(2)}$ have all columns of unit norm.
An important consequence of this normalization  is that the condition number is nearly optimized over all scalings (see \cite{slu-69})
\begin{equation}
\kappa_2(\X_m^{(1)}) \leq \sqrt{m} \min_{D=\mathrm{diag}}\kappa_2(\X_m D) \leq \sqrt{m} \kappa_2(\X_m) .
\end{equation}
For the sake of brevity, here we consider only the scaling by $D_x$, i.e. we proceed with $\X_m^{(1)}$ and $\Y_m^{(1)}\equiv \A \X_m^{(1)}$ as the new input-output pair. 

A caveat is in order here. Namely, scaling experimental data that might have been contaminated by noise is a tricky business. If for some $\f_i$ with tiny $\|\f_i\|_2$ its SNR is low, then equilibration  has the effect of \emph{noise laundering}, i.e. such a noisy data may \emph{"legally"} participate in numerical rank determination, and it could happen that the POD basis will capture a lot of noise. In such cases, the scaling should be assisted  by statistical reasoning, based on additional input on the statistical properties of the noise; see e.g. \cite{Dawson2016}. In this paper we do not delve into these issues, but we stress their importance that will be addressed in our future work.  
A numerical linear algebra framework for the corresponding computational methods is set in \S  \ref{zd:S=General-weighted}.

\subsection{New method: DDMD\_RRR}\label{SS=DMD-RRR}
As a summary of the preceding considerations, we propose the following Algorithm \ref{zd:ALG:DMD:RRR} -- Refined Rayleigh Ritz Data Driven Modal Decomposition:\footnote{{DDMD\_RRR, to be pronounced as "R-cubed DDMD".}}
 
\begin{algorithm}[H]
	\caption{$[Z_k, \Lambda_k, r_k, \rho_k]=\mathrm{DDMD\_RRR}(\X_m,\Y_m; \epsilon)$ \{\emph{Refined Rayleigh-Ritz DDMD}\}}
	\label{zd:ALG:DMD:RRR}
	\begin{algorithmic}[1]
		\REQUIRE \  \\		
		\begin{itemize} 
			\item $\X_m=(\x_1,\ldots,\x_m), \Y_m=(\y_1,\ldots,\y_m)\in \mathbb{C}^{n\times m}$ that define a sequence of snapshots pairs $(\x_i,\y_i\equiv \A(\x_i))$. (Tacit assumption is that $n$ is large and that $m \ll n$.)
			\item Tolerance level $\epsilon$ for numerical rank determination.		
		\end{itemize}
		\STATE $D_x=\mathrm{diag}(\|\X_m(:,i)\|_2)_{i=1}^m$; $\X_m^{(1)}= \X_m D_x^{\dagger}$; $\Y_m^{(1)} =\Y_m D_x^{\dagger}$
		\STATE $[U,\Sigma, V]=svd(\X_m^{(1)})$ ; \COMMENT{\emph{The thin SVD: $\X_m^{(1)} = U \Sigma V^*$, $U\in\mathbb{C}^{n\times m}$, $\Sigma=\mathrm{diag}(\sigma_i)_{i=1}^m$}}
		\STATE Determine numerical rank $k$, with the threshold $\epsilon$: $k = \max\{ i \; : \; \sigma_i \geq \sigma_1 \epsilon \}$.
		\STATE Set $U_k=U(:,1:k)$, $V_k=V(:,1:k)$, $\Sigma_k=\Sigma(1:k,1:k)$ 	
		\STATE ${B}_k = \Y_m^{(1)} (V_k\Sigma_k^{-1})$; \COMMENT{\emph{Schmid's data driven formula for $\A U_k$}}
		\STATE $[Q, R]=qr(\begin{pmatrix} U_k, & B_k\end{pmatrix})$; \COMMENT{\emph{The thin QR factorization: $\begin{pmatrix} U_k, & B_k\end{pmatrix} = QR$; $Q$ not computed}}
		\STATE $S_k = \mathrm{diag}(\overline{R_{ii}})_{i=1}^k R(1:k,k+1:2k)$ \COMMENT{\emph{$S_k = U_k^* \A U_k$ is the Rayleigh quotient}}
		\STATE $\Lambda_k = \mathrm{eig}(S_k)$ \COMMENT{$\Lambda_k=\mathrm{diag}(\lambda_i)_{i=1}^k$; \emph{Ritz values, i.e. eigenvalues of $S_k$}}
		\FOR{$i=1,\ldots, k$}
		\STATE $[\sigma_{\lambda_i},w_{\lambda_i} ]=svd_{\min}(\left( \begin{smallmatrix} R(1:k,k+1:2k)-\lambda_i R(1:k,1:k)\cr R(k+1:2k,k+1:2k) \end{smallmatrix}\right))$; \COMMENT{\emph{Minimal singular value of $R_{\lambda_i}$ and the corresponding right singular vector}}
		\STATE $W_k(:,i)=w_{\lambda_i}$
		\STATE $r_k(i) = \sigma_{\lambda_i}$ \COMMENT{\emph{Optimal residual, $\sigma_{\lambda_i}=\|R_{\lambda_i} w_{\lambda_i}\|_2$}}
		\STATE $\rho_k(i)=w_{\lambda_i}^* S_k w_{\lambda_i}$ \COMMENT{\emph{Rayleigh quotient, $\rho_k(i)= (U_k w_{\lambda_i})^* \A (U_k w_{\lambda_i})$}}
		\ENDFOR
		\STATE $Z_k = U_k W_k$ \COMMENT{\emph{Refined Ritz vectors}}
		\ENSURE $Z_k$, $\Lambda_k$, $r_k$, $\rho_k$
	\end{algorithmic}
\end{algorithm}

\noindent Few comments are in order.

\begin{remark}
In Line 8, only the eigenvalues are computed, which saves $O(k^3)$ flops as compared to Line 5 in Algorithm \ref{zd:ALG:DMD}. The Ritz vectors are then computed in the refinement procedure. Only this option is shown for the sake of brevity; our software implementation allows first computing the Ritz vectors with the corresponding residuals, and then refining only the selected ones, recall the discussion in \S \ref{zd:SS=refined-vectors}.
\end{remark}	

\begin{remark}
The simplest way to implement Line 10 is to compute the full $2k\times k$ SVD, and then to take the smallest singular value $\sigma_{\lambda_i}$ and the corresponding right singular vector $w_{\lambda_i}$. It is an interesting research problem to develop a more efficient method that will target only $\sigma_{\lambda_i}$, $w_{\lambda_i}$ and thus reduce the complexity, from $O(k^3)$ to $O(k^2)$ per refined vector.	
\end{remark}

\begin{remark}
		Proposition \ref{zd:PROP:refined-residual} identifies the smallest singular value $\sigma_{\lambda_i}$ as the value of the minimal residual. We should be aware that the smallest singular value may be computed with relatively large error, so in practice it may not represent the residual norm exactly. Although we are not interested in the number of accurate digits of {the} residual, but in its order of magnitude, we can also compute it as $\sigma_{\lambda_i}=\|R_{\lambda_i} w_{\lambda_i}\|_2$, or using the formula (\ref{zd:eq:DMD-comp-residual}). 
\end{remark}		

\begin{remark}\label{zd:REM:rho(i)}
	Line 13 is motivated by the fact that for any candidate $x$ for an eigenvector of $\A$, the Rayleigh quotient $\rho=x^* \A x/(x^* x)$ minimizes the residual $\|\A x - \rho x\|_2$; see item 3. in Theorem \ref{zd:TM:KRR-summary} . Hence, for $i=1,\ldots, k$, the pair $(U_k w_{\lambda_i},\rho_k(i))$ can be used instead of $(U_k w_{\lambda_i},\lambda_i)$. 	
\end{remark}

\section{Numerical examples}\label{zd:S=Examples}
The main goal of the modifications of the DMD algorithm, proposed in \S \ref{S=New-DMD-RRR}, is to provide a reliable black-box, data driven software device that can estimate part of 
the spectral information of the underlying linear operator, and that also can  provide an error bound.   In this section we illustrate the performance of the new, modified DMD algorithm, 
first using a synthetic example  (to illustrate the fine details of numerical software development, and possible causes of failure even in seemingly simple cases), and then in a concrete example from computational fluid dynamics (to illustrate the benefits of enhanced functionality of the new DMD algorithm in deeper analysis of the original physical problem and its numerical model).

A series of numerical tests show that Algorithm \ref{zd:ALG:DMD:RRR} is never much worse, but it can be substantially better than Algorithm \ref{zd:ALG:DMD}. We also show how the added functionality of the new algorithm works in practice. 

\subsection{Case study: a synthetic example}\label{zd:SSS=Synth-Ex}
The first  test drive of the code uses randomly generated test matrix that is difficult enough to illustrate the improvements, and to expose weaknesses from which we can learn how {to} devise better methods. Also, we use this example to stress the importance of careful software implementation. We use Matlab 8.5.0.197613 (R2015a).

The test matrix is generated as $A = \mathbf{e}^{-B^{-1}}$ where $B$ is pseudo-random with entries uniformly distributed in $[0,1]$, and then $\A = A /\|A\|_2$. (This normalization is only for convenience, because all involved subspaces remain unchanged, and the Ritz values and the eigenvalues change with the same scaling factor.) Although this example is purely synthetic,  it may represent a situation with the spectrum entirely in the unit disc, such as e.g. in the case of an off-attractor analysis of a dynamical system, after removing the peripheral eigenvalues \cite{Mohr-Mezic-GLA-2014}. 

Then, the initial state $\f_1$ is taken with entries from uniform distribution in $[0,1]$, and the sequence $\f_{i+1}=\A \f_i$ is generated for $i=1,\ldots, m$. The dimensions are taken as $n=1000$, $m=99$. The reference (true) eigenvalues of $\A$ are computed using the Matlab function \texttt{eig()}. The subspace spanned by the snapshots $\f_i$ contains valuable information on the spectrum of $\A$ and we test the ability of a method to extract that information and to correctly report its fidelity.
The residuals are computed exactly (using $\A$, for testing purposes) and also returned by the algorithms (which have no access to $\A$), using only the snapshots and Proposition \ref{zd:PROP:DMD-comp-residual} and Proposition \ref{zd:PROP:refined-residual}.
The truncation threshold in (\ref{zd:eq:k}) is taken as $\epsilon = n \roff$, where $\roff$ is the round-off unit; in Matlab $\roff=\texttt{eps}\approx 2.2\cdot 10^{-16}$.

\subsubsection{Sequential shifted data, numerical rank $k$ defined as in (\ref{zd:eq:k})}\label{zd:SSS=Synth-Ex-1.1} 
We take $\X_m = (\f_1, \f_2, \dots , \f_{99})$, $\Y_m=(\f_2, \f_3,\ldots, \f_{100})$, and give them as inputs to  both
 Algorithm \ref{zd:ALG:DMD} and Algorithm \ref{zd:ALG:DMD:RRR}. The difference in the quality of the computed Ritz pairs, illustrated {in} Figure \ref{zd:fig:Ex1:eigs} and Figure \ref{zd:fig:Ex1:resids}, is striking. 
We first note that DMD has returned only $8$ Ritz pairs, while DDMD\_RRR returns $27$. The reason is in a sharp decay of the norms $\|\f_i\|_2$, forcing the truncated SVD into declaring low numerical rank. As a result, small number of eigenvalues is approximated from a subspace of small dimension, and large portion of the supplied information is truncated, i.e. left unexploited.

\begin{figure}[H]
\begin{center}
\includegraphics[width=\linewidth, height=3in]{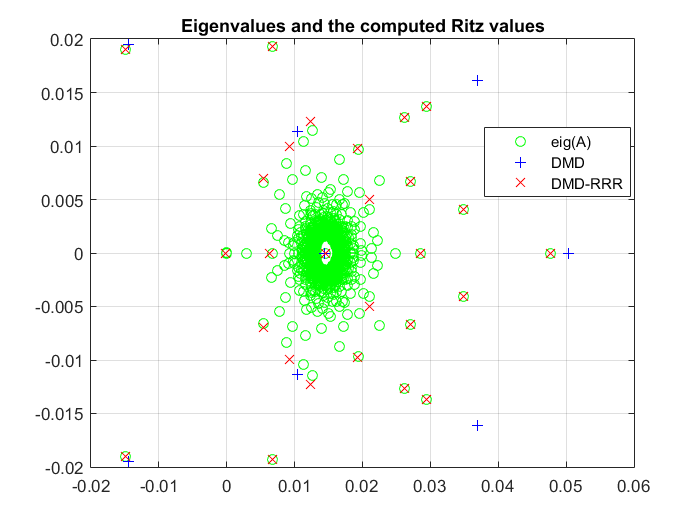}
\end{center}	
\caption{\label{zd:fig:Ex1:eigs} (Example in \S \ref{zd:SSS=Synth-Ex-1.1}, with sequential data) Comparison of the approximations of the eigenvalues of $\A$ (circles $\circ$) by the Ritz values computed by Algorithm \ref{zd:ALG:DMD} (pluses $+$) and Algorithm \ref{zd:ALG:DMD:RRR} (crosses, $\times$), with the same threshold in the truncation criterion for determining the numerical rank as defined in (\ref{zd:eq:k}).}
\end{figure} 

\begin{figure}[H]
	\begin{center}
		\includegraphics[width=\linewidth, height=1.8in]{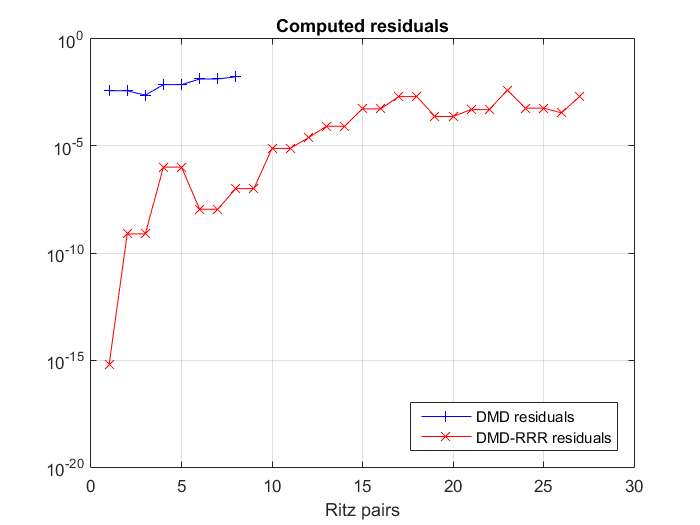}
	\end{center}	
	\caption{\label{zd:fig:Ex1:resids} (Example in \S \ref{zd:SSS=Synth-Ex-1.1}, with sequential data) Comparison of the residuals of the Ritz pairs computed by Algorithm \ref{zd:ALG:DMD} (pluses $+$) and Algorithm \ref{zd:ALG:DMD:RRR} (crosses, $\times$), with the same threshold in the truncation criterion for determining the numerical rank as defined in (\ref{zd:eq:k}).}
\end{figure} 

\subsubsection{Sequential shifted data, numerical rank hard-coded as $k=27$}\label{zd:SSS=Synth-Ex-1.2}
In this experiment, we set $k=27$, so both algorithms will take best $27$-dimensional subspaces from their SVD decompositions; hence Algorithm \ref{zd:ALG:DMD:RRR} will return the same output as in \S \ref{zd:SSS=Synth-Ex-1.1}, and Algorithm \ref{zd:ALG:DMD} is allowed to use the same dimension\footnote{Note that this means that both algorithms use subspaces of same dimensions, but not necessarily the same subspace.} $27$, instead of $8$ as it would have been taken using (\ref{zd:eq:k}).
This is an artificial scenario, as the algorithm must determine the most appropriate value of $k$ autonomously for each given input -- the ramifications of choosing $k$ inappropriately are illustrated in \S \ref{zd:SSS=Synth-Ex-1.1}. However, it is instructive to see and analyze the results returned by Algorithm \ref{zd:ALG:DMD}. 

The computed Ritz values, shown {in} Figure \ref{zd:fig:Ex1:eigs:k27}, reveal an interesting fact -- we see only $12$ of them (pluses, $+$) because many are computed as tiny numbers clustered around zero, and the overall residuals, displayed on Figure \ref{zd:fig:Ex1:resids:k27}, show no substantial improvement.

\begin{figure}[H]
	\begin{center}
		\includegraphics[width=\linewidth, height=3in]{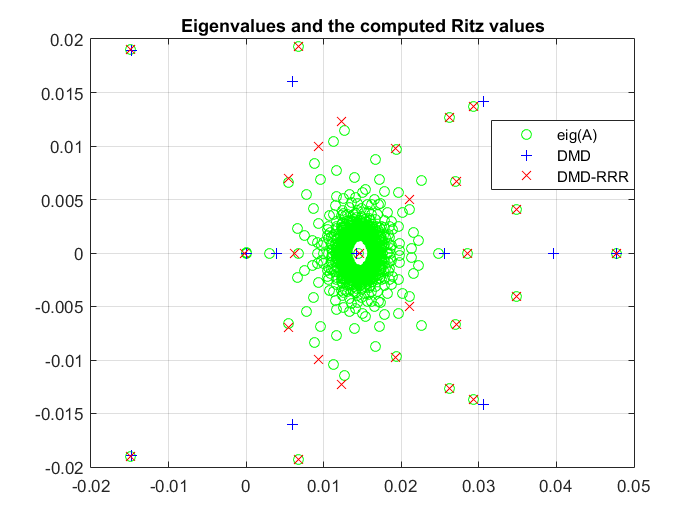}
	\end{center}	
	\caption{\label{zd:fig:Ex1:eigs:k27} (Example in \S \ref{zd:SSS=Synth-Ex-1.2}, with sequential data) Comparison of the approximations of the eigenvalues of $\A$ (circles $\circ$) by the Ritz values computed by Algorithm \ref{zd:ALG:DMD} (pluses $+$) and Algorithm \ref{zd:ALG:DMD:RRR} (crosses, $\times$), with the same  numerical rank $k=27$. The plus in the vicinity of the origin is actually a cluster of $16$ pluses ($+$) clustered around zero with the Ritz values in modulus equal     
\emph{5.4e-05,
	7.1e-07,
	2.0e-08,
	1.4e-10,
	1.1e-12,
	3.8e-14,
	2.8e-16,
	6.5e-19,
	7.5e-20,
	1.9e-22,
	1.0e-24,
	1.0e-24,
	6.9e-28,
	6.6e-31,
	3.4e-31,
	3.6e-33}.}
\end{figure} 

Furthermore, the formula (\ref{zd:eq:DMD-comp-residual}) for the residuals collapsed when used in Algorithm \ref{zd:ALG:DMD}, returning the values that were underestimated by several orders of magnitude, as shown by the ratios 
\begin{equation}\label{zd:eq:rez-ratios}
\eta_i =
\frac{\| (\Y_m V_k\Sigma_k^{-1}) W_k(:,i) - \lambda_i (U_k W_k(:,i)) \|_2}{\| \A (U_k W_k(:,i)) - \lambda_i (U_k W_k(:,i))\|_2} \equiv 1 ,\;\;i=1,\ldots, k.
\end{equation}
The computed values of $\eta_i$ for Algorithm \ref{zd:ALG:DMD} are below $10^{-10}$, i.e. the residuals returned by the algorithm are e.g. $10^{10}$ times smaller than the actual values (see Figure \ref{zd:fig:Ex1:resids:k27}), yielding wrong conclusion that the approximations are good.

\begin{figure}[H]
	\begin{center}
		\includegraphics[width=2.9in, height=2.2in]{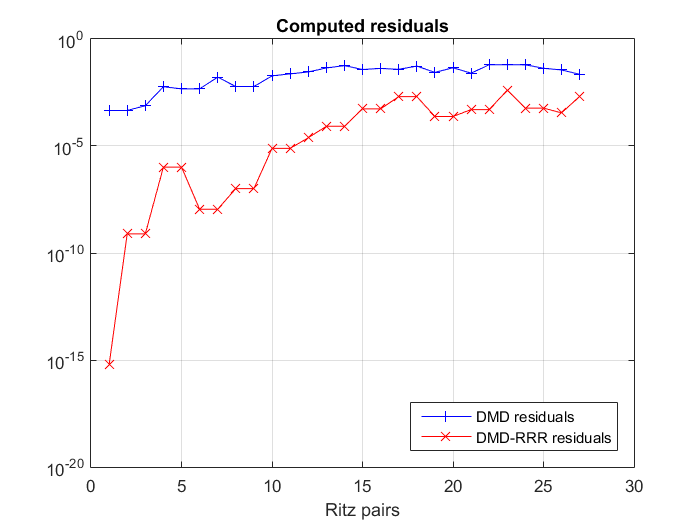}
		\includegraphics[width=2.9in, height=2.2in]{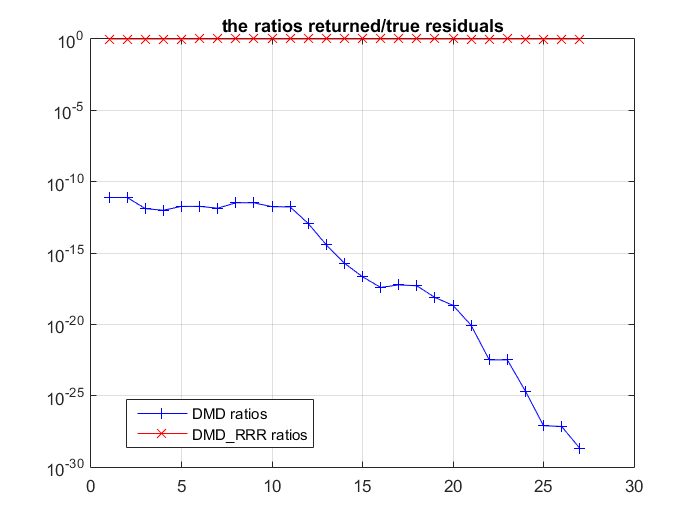}
	\end{center}	
	\caption{\label{zd:fig:Ex1:resids:k27} (Example in \S \ref{zd:SSS=Synth-Ex-1.2}, with sequential data) Comparison of the residuals of the Ritz pairs computed by Algorithm \ref{zd:ALG:DMD} (pluses $+$) and Algorithm \ref{zd:ALG:DMD:RRR} (crosses, $\times$), with the same  numerical rank $k=27$. The second plot shows the ratios (\ref{zd:eq:rez-ratios}) of the returned and the true residuals computed using $\A$ explicitly, as in the denominator in (\ref{zd:eq:rez-ratios}).}
\end{figure} 

\subsubsection{Discussion}\label{SSS=Discussion}
It is important to understand why the DMD did not take advantage of the supplied information on the numerical rank. 
 
\paragraph{What went wrong?} The key insight is in Figure \ref{zd:fig:Ex1:resids:svdX}. Only 10 singular values used in Algorithm \ref{zd:ALG:DMD} (computed in Line 2., using the Matlab function \texttt{svd()}) are accurate to a satisfactory level; the remaining $17$ are mostly severely overestimated, as the true singular values of $\X_m$ decay very rapidly down to the level of $10^{-150}$. In Figure \ref{zd:fig:Ex1:resids:svdX}, we display those values, together with the singular values of $\X_m$, computed by calling {the same Matlab function \texttt{svd()}}. Only the first $10$ singular values agree to some level of accuracy. 
Due to the large upward error, the formula for $\A U_k$ as $\Y_m V_k\Sigma_k^{-1}$ fails and the trailing columns of $\Y_m V_k\Sigma_k^{-1}$ are severely underestimated because the computed $\Sigma_k^{-1}$ is not big enough, see Figure 
\ref{zd:fig:Ex1:resids:UAU}. (We have checked that the minimal singular value of $\A$ is larger than $10^{-6}$, and the minimax theorem implies that the  norms of the columns of $\A U_k$ must be also larger than $10^{-6}$.) As a result, the trailing $17$ columns of te matrix $S_k$ are entirely wrong and  mostly very tiny in norm.

Further, we should keep in mind that determining numerical rank (and truncating the SVD of the data matrix) is  a delicate procedure that must consider also the scaling of the data, statistical information on the error in the  data etc. Hard coded universal threshold, without any data preprocessing,  is not always the best one can do.

\paragraph{Why it went wrong?}
The difference between the computed singular values is due to the fact that Matlab uses different algorithms in the \texttt{svd()} function, depending on whether the singular vectors are requested on output.  The faster but less accurate method is used in the call $[U,S,V]=\mathrm{\texttt{svd}}(\X_m,'econ')$.
To or best knowledge, Matlab documentation does not provide sufficient information, except that the SVD is based on the LAPACK subroutines. It is very likely that the full SVD, including the singular vectors, is computed using the divide and conquer algorithm, \texttt{xGESDD()} in LAPACK and, for computing only the singular values, $S=\texttt{svd}(X)$ calls the QR SVD, \texttt{xGESVD()} in LAPACK. This policy of not specifying the method, and thus hiding the key information on the numerical reliability of the output may cause numerical errors to inconspicuously degrade the overall computation. Without providing information that may be mission critical, software developers often choose to prefer faster method by sacrificing numerical accuracy, without informing the user on possible consequences. Note that the same fast \texttt{xGESDD()} subroutine is (to the best of our knowledge) under the hood of the Python function \texttt{numpy.linalg.svd}.
Numerical robustness of both \texttt{xGESVD()}  \texttt{xGESDD()} depends on $\kappa_2(\X_m)$, and if one does not take advantage of the fact that scaling is allowed, the problems illustrated in this section are likely to happen.
For numerically more robust computation of the SVD independent of scaling, see \cite{drm-ves-VW-1}, \cite{drm-ves-VW-2}, \cite{drm-xgesvdq}.

\begin{figure}[H]
	\begin{center}
		\includegraphics[width=\linewidth, height=2.5in]{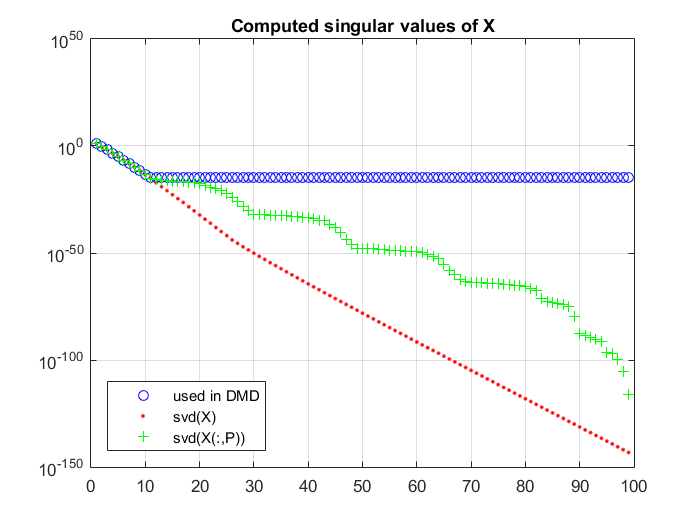}
	\end{center}	
	\caption{\label{zd:fig:Ex1:resids:svdX} (Example in \S \ref{zd:SSS=Synth-Ex-1.2}, with sequential data) Three sets of the computed approximate singular values of the data matrix $\X_m$. The blue circles (\textcolor{blue}{$\circ$}) are the values used in Algorithm \ref{zd:ALG:DMD} and are computed as $[U,\Sigma,V]=\mathrm{\texttt{svd}}(\X_m,'econ')$. The red dots (\textcolor{red}{$\cdot$}) are computed as $\Sigma=\mathrm{\texttt{svd}}(\X_m)$, and the pluses ($+$) are the results of $\Sigma=\mathrm{\texttt{svd}}(\X_m(:,P))$, where $P$ is randomly generated permutation.}
\end{figure} 

\begin{figure}[H]
	\begin{center}
		\includegraphics[width=\linewidth, height=2.5in]{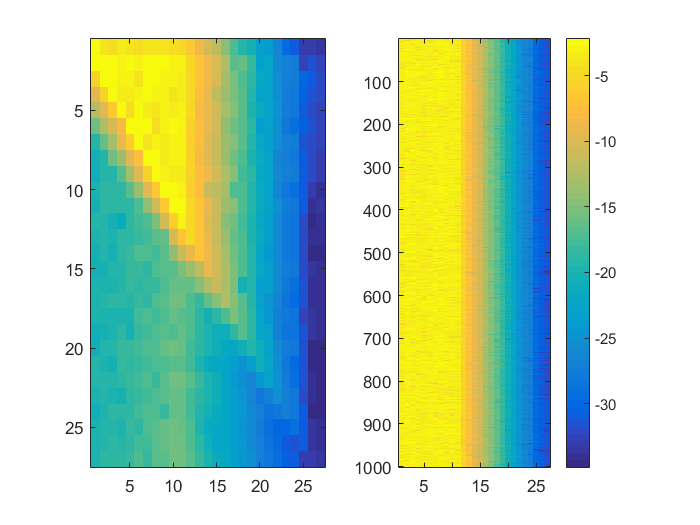}
	\end{center}	
	\caption{\label{zd:fig:Ex1:resids:UAU} (Example in \S \ref{zd:SSS=Synth-Ex-1.2}, with sequential data) Left: The matrix $\log_{10}|S_k|$ as computed in Algorithm \ref{zd:ALG:DMD}. Right: The matrix $\log_{10}|\Y_m V_k\Sigma_k^{-1}|$, using $V_k$ and $\Sigma_k$ as computed in Algorithm \ref{zd:ALG:DMD}, using Matlab  $[U,\Sigma,V]=$svd($\X_m$,'econ'), $V_k=V(:,1:k)$, $\Sigma_k=\Sigma(1:k,1:k)$. Recall that $S_k = U_k^* (\Y_m V_k\Sigma_k^{-1})$.}
\end{figure}
\subsubsection{Odd-even data splitting} We conclude this experiment with another run, but this time we halve the dimensions of the data subspaces by taking 
$$
\X_{m/2}=(\f_1,\f_3,\ldots ,\f_{97},\f_{99}),\;\;\; \Y_{m/2}=(\f_2, \f_4,\ldots, \f_{98},\f_{100}), 
$$
i.e. split the snapshots by taking every other into $\X_{m/2}$. The results are given in Figure \ref{zd:fig:Ex1:eigs-odd-even}. Again, Algorithm \ref{zd:ALG:DMD:RRR} has identified more Ritz pairs with smaller residuals than Algorithm \ref{zd:ALG:DMD}, even with the initial space of half of the dimension, as compared with sequential shifted data. It is worth mentioning that the complexity of the first step in all DMD methods is quadratic in the column dimension, thus the reduction here is with the factor of four.

\begin{figure}[H]
	\begin{center}
		\includegraphics[width=2.9in, height=2.2in]{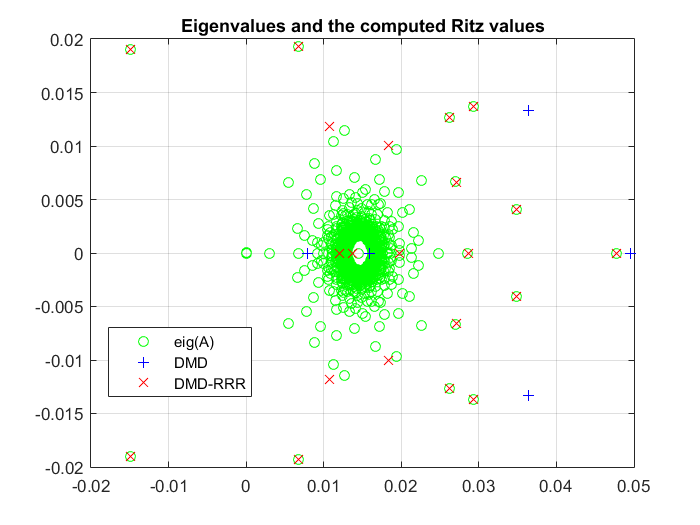}
		\includegraphics[width=2.9in, height=2.2in]{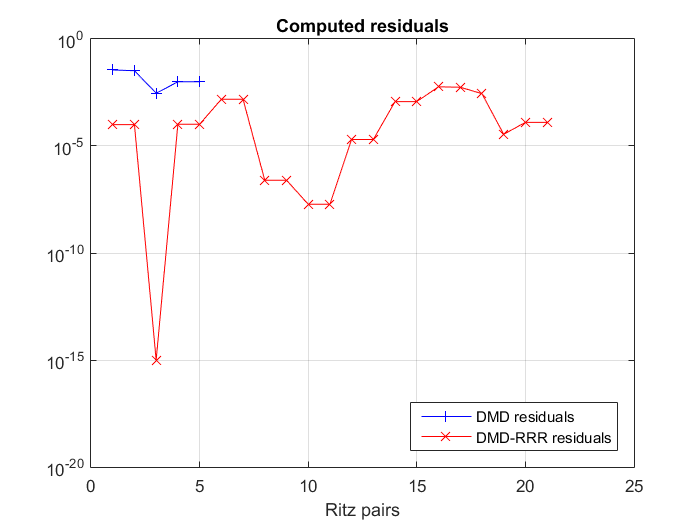}
	\end{center}	
	\caption{\label{zd:fig:Ex1:eigs-odd-even} (Example in \S \ref{zd:SSS=Synth-Ex}, odd-even data splitting) Left: The eigenvalues of $\A$ (circles \textcolor{green}{$\circ$}), the Ritz values computed by Algorithm \ref{zd:ALG:DMD} (pluses \textcolor{blue}{$+$}) and Algorithm \ref{zd:ALG:DMD:RRR} (crosses, \textcolor{red}{$\times$}), with the same threshold in the truncation criterion for determining the numerical rank as defined in (\ref{zd:eq:k}). Right: Comparison of the residuals.}
\end{figure} 


\subsection{Flow around a cylinder}\label{zd:SS=Example-cylnder}
In this example, we illustrate the benefits of the new proposed techniques in a concrete application -- numerical  Koopman spectral analysis of the wake behind a circular cylinder. 
The well studied and understood  model of laminar flow around a cylinder is based on the two-dimensional incompressible Navier-Stokes equations
\begin{eqnarray}
\frac{\partial \mathbf{v}}{\partial t} &=& - (\mathbf{v} \cdot \nabla)\mathbf{v} + \nu \Delta \mathbf{v} - \frac{1}{\rho}\nabla p \\
\label{eq:incompressible_navier_stokes} 
\nabla\cdot\mathbf{v} &=& 0,
\end{eqnarray}
where $\mathbf{v} = (v_{x}, v_{y})$ is velocity field, $p$ is pressure, $\rho$ is fluid density and $\nu$ is kinematic viscosity.
The flow is characterized by the Reynolds number
$\mathfrak{Re}={v^* D}/{\nu}$
where, for flow around circular cylinder, the characteristic quantities are the inlet velocity $v^*$ and the cylinder diameter $D$. For a detailed analytical treatment of the problem see \cite{bagheri-2013}, \cite{glaz2016quasi}; for a more in depth description of the Koopman analysis of fluid flow we refer to \cite{Mezic:2013}, \cite{Rowley:2009ez}. 

We used the velocity and pressure data from a numerical simulation described in detail  in \cite{AIMdyn-cylinder-wake-2017}. We thank Dr. Stefan Ivi\'{c} and Dr. Nelida \v{C}rnjari\'{c}--\v{Z}ic for  providing us with the data. For the sake of completeness and for the reader's convenience, we briefly describe the numerical procedure.

The flow, with $x$ denoting the streamwise direction, is numerically simulated in the rectangular domain $[-12D, 20D] \times [-12D, 12D]$, with appropriate boundary conditions. The inlet velocity is set to
$\mathbf{v}=(v^*,0)$, and the outlet $\partial\mathbf{v}/\partial x=0$; on the side bounds the slip wall condition $v_{y}=0$ is imposed, while the no-slip wall condition $\mathbf{v}=(0,0)$ is set on the cylinder edge. For the pressure, homogeneous Neumann boundary condition is used for all domain boundaries except for the outlet where $p=0$.

With this setup, the laminar incompressible two-dimensional flow model is for $\mathfrak{Re}=150$ simulated numerically using OpenFOAM's \texttt{icoFOAM} solver \cite{jasak2007openfoam} on a structured grid containing 86000 cells, and with time step $\Delta t=0.01~\textmd{s}$.

The observables for the DMD analysis, two components of the velocity and the pressure, are collected as follows:
After the flow was fully developed, the snapshots are extracted from a  time interval of $100$ seconds with the time step of $\Delta t=0.1~s$. All three state variables obtained on the structured CFD mesh are interpolated, and then evaluated on a $201 \times 101$ uniform rectangular grid in the $[-10,30] \times [-10,10]$ sub-domain. The $201 \times 101$ array data structure is then, for each variable, reshaped into an $20301\times 1$ array. 

The thus obtained data matrix of the snapshots has the block row structure with each snapshot $\f_i$ containing as observables the components of the velocities $v_{ix}$, $v_{iy}$, and the pressure $p_i$,
$$
\f_i = \begin{pmatrix} v_{ix} \cr v_{iy} \cr p_i\end{pmatrix} \equiv  \begin{pmatrix} \widehat{\f}_i \cr p_i\end{pmatrix} ,\;\;
v_{ix},  v_{iy},  p_i \in \mathbb{R}^{20301},\;\;i=1,\ldots , 1001.
$$
We also perform the test with only the components of the velocities as observables, i.e. with $\widehat{\f}_i=( v_{ix},  v_{iy})^T$. For an interpretation of the difference between using $\f_i$ and $\widehat{\f}_i$ , recall \S \ref{SS=DMD-Koopman-connection}.

\noindent Our aim with this example is to illustrate:
\begin{itemize}
\item the advantage of using the data driven residuals to select good Ritz pairs;
\item the advantage of combining the computable residual bounds with the spectral perturbation theory to obtain computable a posteriori error bounds in the approximate eigenvalues 
\item the potential of the Ritz vector refinement procedure (reducing the residual with better approximate eigenvectors, and improving the approximate eigenvalues)
\item the effect of choosing and scaling of the observables (in the context of spectral analysis of the underlying Koopman operator)
\end{itemize}

Let us now discuss the results.

Figure \ref{zd:fig:Ex1:resids-cylinder} shows in the left panel the residuals computed as explained in \S \ref{zd:SSS:reziduals}; for both sets of data the computed Ritz pairs are ordered with increasing residuals. This allows automatic selection of those that better approximate the eigenpairs of the underlying operator. For instance, with a threshold for the residual set to $5\cdot 10^{-4}$,  the selected Ritz values are shown on the right panel. We have checked that the selected Ritz values are close to the unit circle, up to an error of order $10^{-5}$, with quite a few that are  up to $O(10^{-7})$ on the circle. The numerical data from the cylinder flow is selected so that the dynamics has stabilized to evolution on the limit cycle attractor. On the attractor, the Koopman operator is unitary with respect to an underlying invariant measure \cite{Mezic:2005}. Thus, the eigenvalues that we obtain are expected to reside close to the unit circle.

As expected, the $\f_i$'s carry more information than the $\widehat{\f}_i$'s, and more Ritz values have been selected.  However, here we should keep in mind that in a numerical scheme the pressure values might be computed less accurately than the velocities, so that the advantage of extra information is not necessarily fully exploited.
The issues of different level of noise in the observables, different physical nature and scalings (units) of the variables, and proper choice of the norm to measure the residual are important; see \S
\ref{zd:S=General-weighted} for an algebraic framework and numerical algorithms.

\begin{figure}[H]
	\begin{center}
		\includegraphics[width=2.9in, height=2.8in]{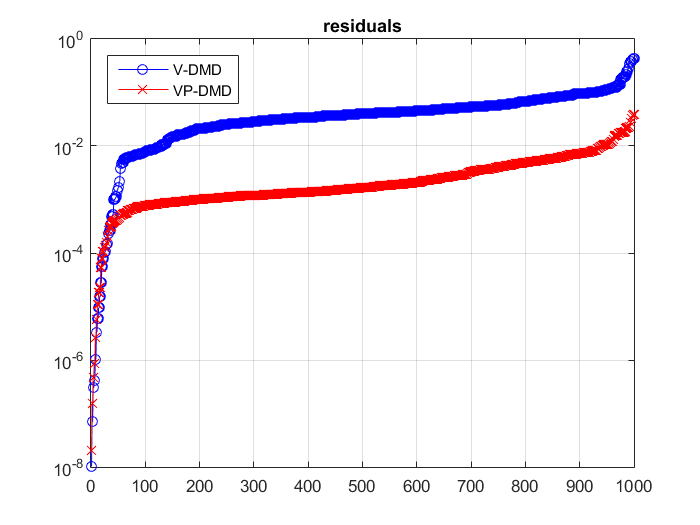}
			\includegraphics[width=2.9in, height=2.8in]{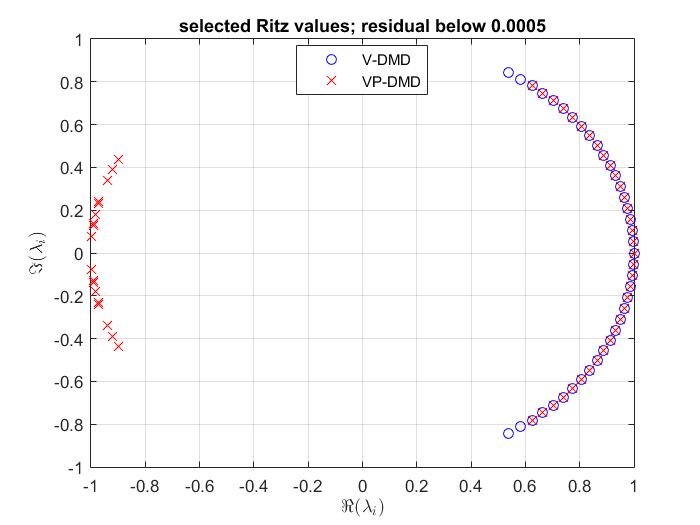}
	\end{center}	
	\caption{\label{zd:fig:Ex1:resids-cylinder} (Example in \S \ref{zd:SS=Example-cylnder}) Left panel: Comparison of the residuals of the Ritz pairs computed by Algorithm \ref{zd:ALG:DMD:RRR} with velocities as observables (V-DMD, circles \textcolor{blue}{$\circ$}) and with both velocities and pressures (VP-DMD, crosses, \textcolor{red}{$\times$}). Right panel:  Selected Ritz values computed by Algorithm \ref{zd:ALG:DMD:RRR} with velocities as observables (circles \textcolor{blue}{$\circ$}) and with both velocities and pressures (crosses, \textcolor{red}{$\times$}).}
\end{figure} 

For the selected pairs, the refinement procedure of \S \ref{zd:SS=refined-vectors} additionally reduces the residuals, as shown on Figure \ref{zd:fig:Ex1:refined-resids-cylinder}.
\begin{figure}[H]
	\begin{center}
		\includegraphics[width=2.9in, height=2in]{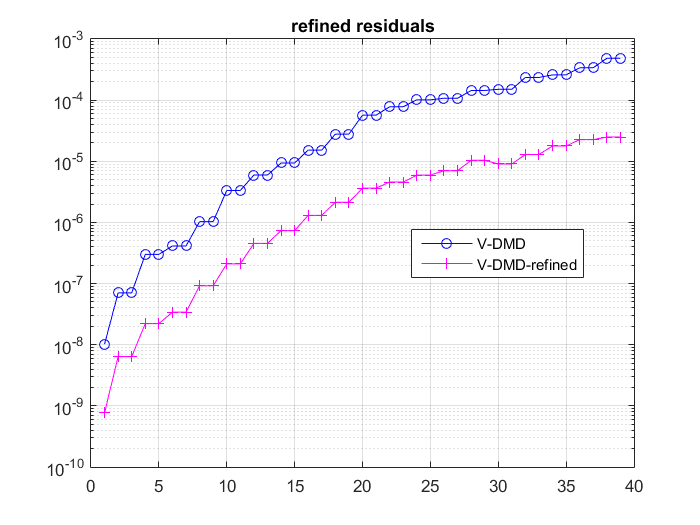}
		\includegraphics[width=2.9in, height=2in]{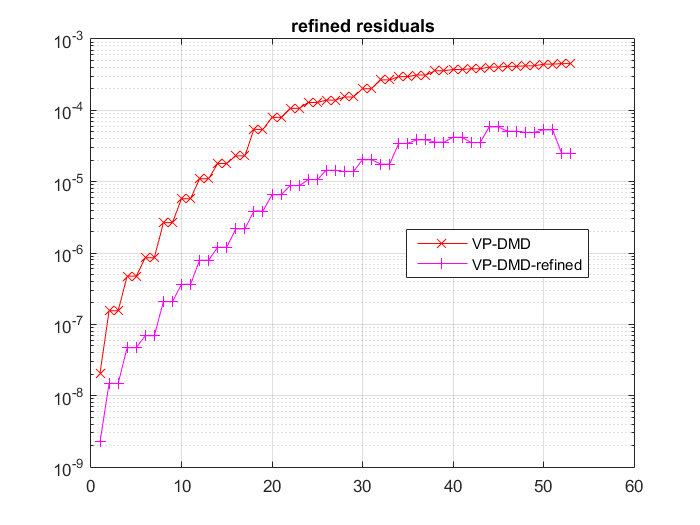}
	\end{center}	
	\caption{\label{zd:fig:Ex1:refined-resids-cylinder} (Example in \S \ref{zd:SS=Example-cylnder}) Comparison of the refined residuals of the Ritz pairs computed by Algorithm \ref{zd:ALG:DMD:RRR} with velocities as observables (top 39 pairs in V-DMD, circles \textcolor{blue}{$\circ$}) and with both velocities and pressures (top 53 pairs in VP-DMD, crosses, \textcolor{red}{$\times$}). The noticeable staircase structure on the graphs corresponds to complex conjugate Ritz pairs.}
\end{figure} 
In fact, since this reduction of the residuals is achieved by improving the Ritz vectors, we can achieve even smaller residuals by refining the Ritz values as in {item} 3. in Theorem \ref{zd:TM:KRR-summary} and Remark \ref{zd:REM:rho(i)}. Furthermore, the classical Bauer--Fike type error bound from {item} 5. in Theorem \ref{zd:TM:KRR-summary} (also used in Proposition \ref{zd:PROP:DMD-comp-residual}) gives useful relative error bounds for the Ritz values close to the unit circle, in particular because the underlying $\A$ is nearly unitary (in an appropriate discretized inner product) for sufficiently large number of data points (see the discussion in \cite{KordaandMezic:2017} on the convergence of finite-dimensional approximations to the Koopman operator, and notice that the underlying Koopman operator for this data is nearly unitary, as discussed above), so the condition number of its eigenvalues is close to one. For instance, we know for sure that our computed values $\lambda_i$ approximate some eigenvalues of $\A$ up to $-\lceil \log_{10}(r_k(i))\rceil$ digits of accuracy\red{;} from Figure \ref{zd:fig:Ex1:refined-resids-cylinder} we see that this means between four and nine correct digits.\footnote{Here we tacitly assume that the residuals are computed sufficiently accurately, and that the error analysis is done using appropriate norms. See Remark \ref{zd:REM:A:M-unitary} and Theorem \ref{zd:TM:BF-weighted}.}

 Without separate analysis we cannot make an analogous statement with respect to the eigenvalues of the underlying Koopman operator; recall Remark \ref{zd:REM:errors}. This important topic of including the operator discretization error in the overall error bound is the subject of our future research.

The approximate Koopman eigenvalues are computed in the vicinity of unit circle as 
\begin{equation}\label{koop-eig}
\koop_j =  \frac{\log\lambda_j}{2\pi\Delta t} \equiv \frac{\log|\lambda_j| + \ii \arg\lambda_j}{2\pi\Delta t}\approx 
\frac{1}{2\pi\Delta t} \ii \arg\lambda_j . 
\end{equation}
Using the selected Ritz values (and their refinements) the computed $\koop_i$'s are as in Figure \ref{zd:fig:Ex1:Koop-eigs0-cylinder}.

\begin{figure}[H]
	\begin{center}
					\includegraphics[width=2.90in, height=3in]{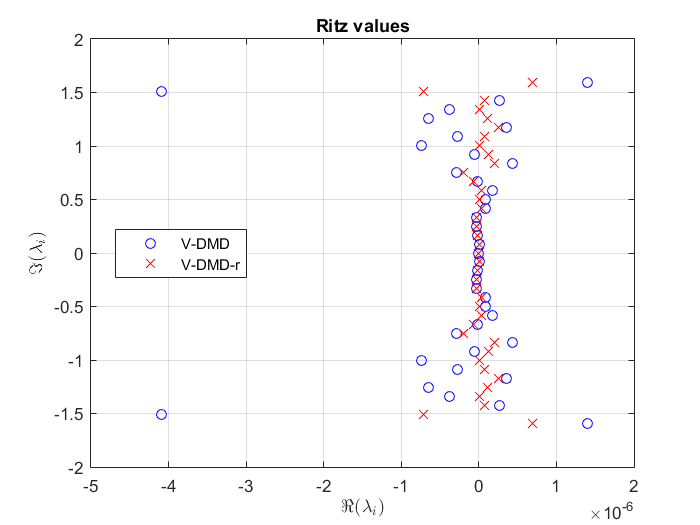}
					\includegraphics[width=2.90in, height=3in]{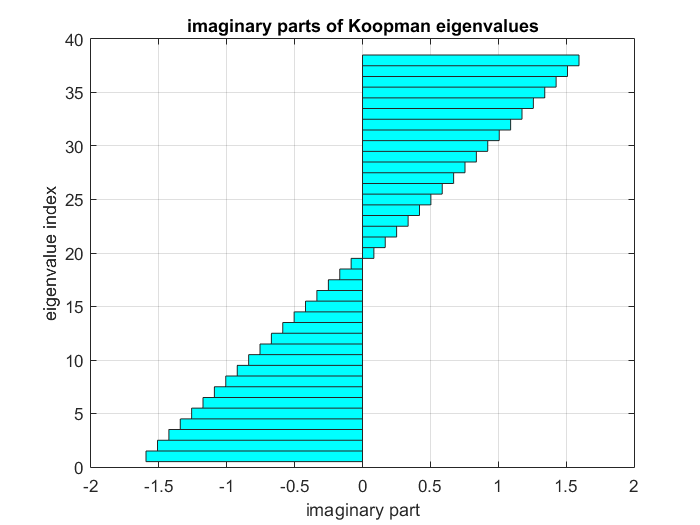} 
	\end{center}	
	\caption{\label{zd:fig:Ex1:Koop-eigs0-cylinder} (Example in \S \ref{zd:SS=Example-cylnder}) Left panel: Approximate Koopman eigenvalues computed by (\ref{koop-eig}) and Algorithm \ref{zd:ALG:DMD:RRR} with velocities as observables (circles \textcolor{blue}{$\circ$}) and with the additional refinement (crosses, \textcolor{red}{$\times$}). The refined Ritz values are defined as the Rayleigh quotients with the refined refined vectors; see {item} 3. in Theorem \ref{zd:TM:KRR-summary} and Remark \ref{zd:REM:rho(i)}. Here the movement closer to the imaginary axis by virtue of (\ref{koop-eig}) means getting closer to the unit disc in Figure \ref{zd:fig:Ex1:resids-cylinder}. Observe the scale on the real axis.  Right panel: The bar graph of the  imaginary parts of the Ritz values selected by the residual thresholding. The cyclic group structure of the eigenvalues on the unit circle is nicely visualized in the $\koop_j$'s; cf.  (\ref{koop-eig}). These values nicely correspond  to the analytically derived formulas for the Koopman eigenvalues of the flow \cite{bagheri-2013}.}
\end{figure} 


%

We conclude this experiment with an observation that only illustrates the issue of choosing properly scaled variables. This, together with the numerical linear algebra framework set in \S \ref{zd:S=General-weighted}, opens further important themes in numerical analysis of spectral properties of Koopman operator and it is the subject of our future research. 

\begin{figure}[H]
	\begin{center}
		\includegraphics[width=2.90in, height=2in]{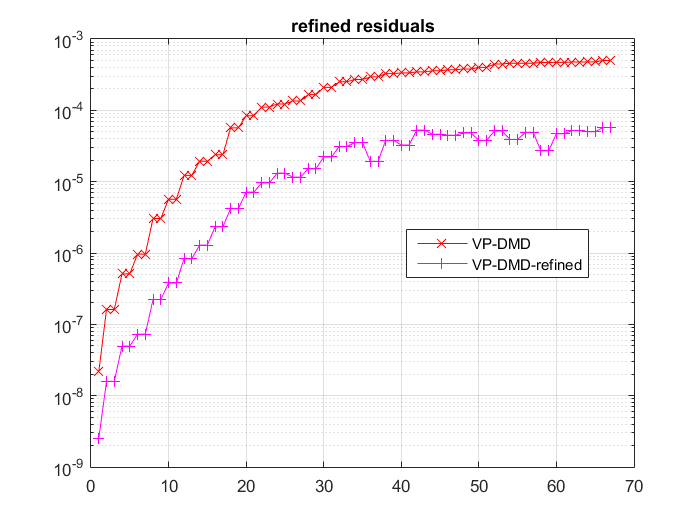}
		\includegraphics[width=2.90in, height=2in]{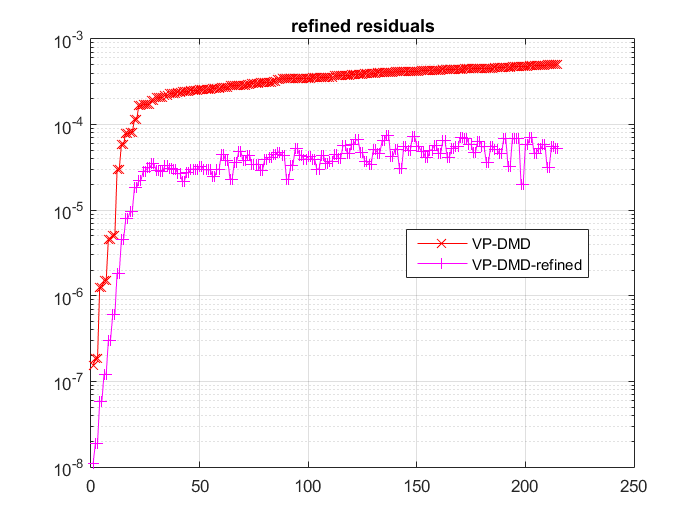}
	\end{center}	
	\caption{\label{zd:fig:Ex1:refined-resids-cylinder-Pxair} (Example in \S \ref{zd:SS=Example-cylnder}) Comparison of the  residuals of the Ritz pairs computed by Algorithm \ref{zd:ALG:DMD:RRR} with velocities and pressure as observables, but with differently scaled pressure values (i.e. to obtain the real pressure). The values of the corresponding refined residuals are also shown. On the left, the residuals of the top 67 pairs (that are below the residual threshold set to $5\cdot 10^{-4}$) are obtained if the scaling factor is $\rho=1.225$. On the right panel, the data for $215$ selected Ritz pairs corresponds to the scaling $\rho=1000$.  
}
\end{figure} 



%
%

\section{Implications to the Exact DMD}\label{zd:S=Exact-DMD}
In the Exact DMD  \cite[Algorithm 2]{Tu-DMD-Theory-Appl},  the action of the unaccessible operator $\A$ is mimicked by $\widehat{A} = \Y_m \X_m^{\dagger}$. Since the null-space of $\X_m$ is a subspace of the null space of $\Y_m=\A\X_m$, then $\widehat{A} \X_m = \Y_m$; for details we refer to \cite{Tu-DMD-Theory-Appl}. Based on the available information, we cannot distinguish $\restr{\A}{\mathcal{X}_m}$ from $\restr{\widehat{A}}{\mathcal{X}_m}$.
Further, Exact DMD cleverly uses the fact that $\mathcal{Y}_m$ is $\widehat{A}$--invariant ($\widehat{A}\Y_m = \Y_m(\X_m^\dagger \Y_m)$) and, thus, it contains exact eigenvectors of $\widehat{A}$.   
%
After computing the Rayleigh quotient matrix $S_k$ and its eigenpairs $\Lambda_k$, $W_k$ ($S_k W_k = W_k\Lambda_k$) in the same way as the DMD does, instead of the Ritz vectors $Z_k = U_k W_k$, the approximate eigenvectors are computed as $Z_k = \Y_m (V_k\Sigma_k^{-1} W_k)\Lambda_k^{-1}$.
It is easily checked that $Z_k = \A U_k W_k \Lambda_k^{-1}$.  

As in the case of DMD, the details of the tolerances for the truncated SVD used in Exact DMD are lacking in the literature. Here we suggest, following the discussion in \S \ref{SSS=Choose-k}, to apply the same strategy as in Algorithm \ref{zd:ALG:DMD:RRR}.
It should be noted here that the definition of $\widehat{A}$ is invariant under the scaling discussed in \S \ref{SS=Scaling}.
Indeed, $(\Y_m D)(\X_m D)^\dagger = \Y_m \X_m^\dagger$ for any $m\times m$ nonsingular matrix $D$.  
However, the numerical rank and the truncated SVD of $\X_m$ will heavily depend on the scaling. In fact, since the Exact DMD and DMD compute the same Ritz eigenvalues, repeating the experiment from \S \ref{zd:SSS=Synth-Ex} would reveal the same problem. 
%
On the other hand, if we apply scaling, the results are as in Figure \ref{zd:fig:Ex1:EDMD}. This clearly demonstrates that Exact DMD as well can benefit  from careful scaling of the data.\footnote{Here we reiterate the comment from the end of \S \ref{SS=Scaling}.} Smaller residuals shown of the right plot are due to the refinement of the Ritz vectors.

\begin{figure}[H]
	\begin{center}
		\includegraphics[width=\linewidth, height=2.2in]{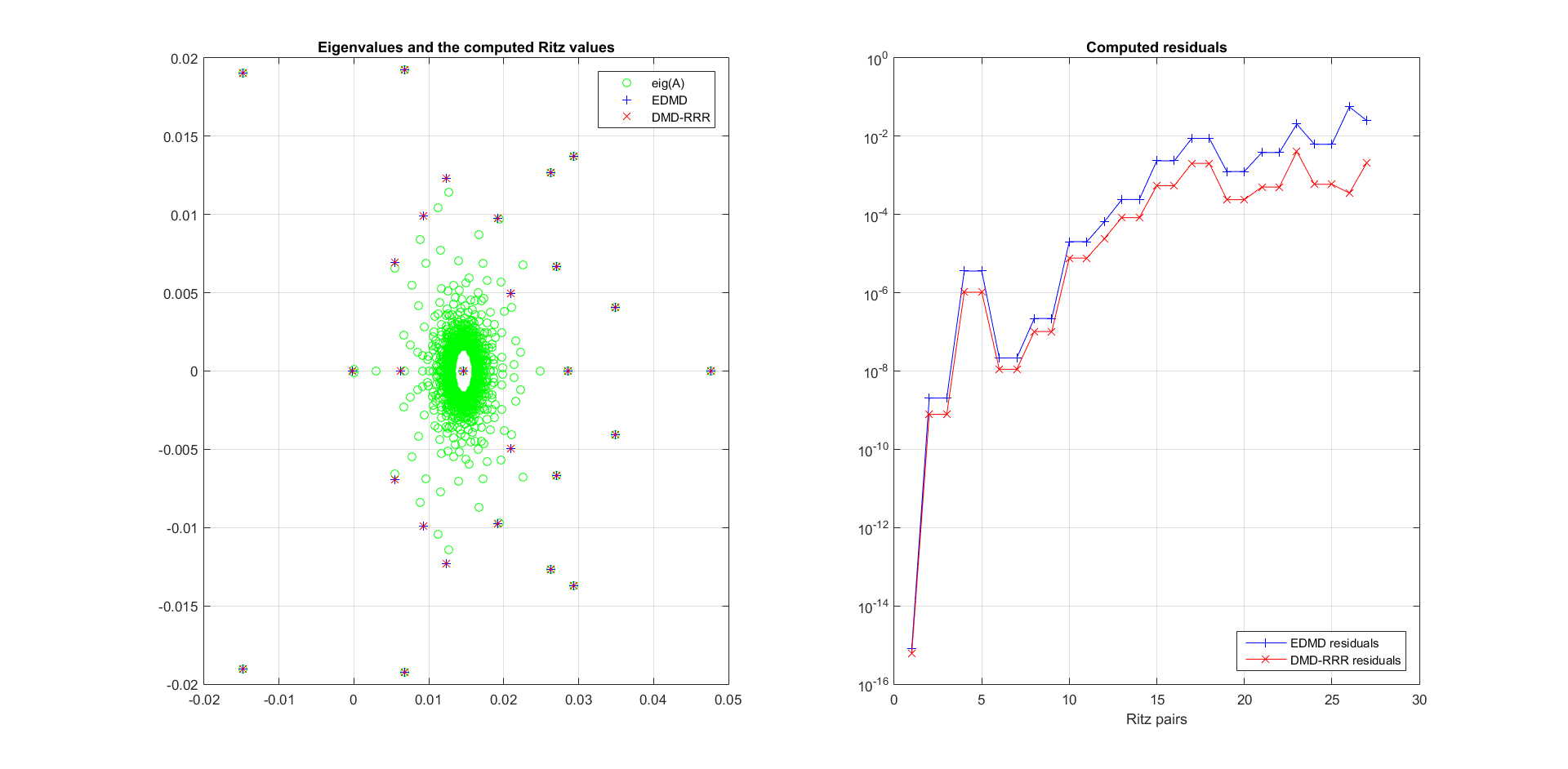}
	\end{center}	
	\caption{\label{zd:fig:Ex1:EDMD} (Example in \S \ref{zd:SSS=Synth-Ex-1.2}, with sequential data) Comparison of the residuals of the Ritz pairs computed by scaling enhanced Exact DMD (pluses $+$) and Algorithm \ref{zd:ALG:DMD:RRR} (crosses, $\times$).}
\end{figure}

{Unfortunately, we cannot compute the residuals of the Ritz pairs with the so called \emph{``exact eigenvectors''} $Z_k = \Y_m (V_k\Sigma_k^{-1} W_k)\Lambda_k^{-1}$.}
Namely, if we use all snapshots, we cannot test the quality of the Ritz vectors as we do not know how to apply $\A$ on them\footnote{In the case of sequential data, we can decide not to use $\f_{m+1}$ in the DMD construction, so we can use it for computing the residuals. In that case, the Exact DMD works with the subspaces of dimension $m-1$.} -- the data does not contain enough information for computing $\A \Y_m$, which is needed to compute $\A Z_k(:,i)$. In the case of sequential data (\ref{zd:eq:XY}), the convergence mechanism of power iterations (see Remark \ref{zd:Remark-POWIT}) may help.
\begin{proposition} Let $\X_m$ and $\Y_m$ be as in (\ref{zd:eq:XY}), and let $y = \sum_{i=1}^m \eta_i \f_{i+1} \in\Y_m$. Then
	\begin{equation}\label{zd:eq:A-A}
	\A y = \widehat{A} y + \eta_m \A (I-\X_m \X_m^\dagger)\f_{m+1} \equiv \widehat{A}y + \eta_m \A r_{m+1},
	\end{equation}
where $r_{m+1} = \f_{m+1}-\X_m (\X_m^\dagger \f_{m+1})$ is the optimal residual as in \S \ref{SSS=Krylov-decomposition}. In particular, if $y$ is an eigenvector with $\widehat{A}y=\lambda y$, then $\A y - \lambda y = \eta_m \A r_{m+1}$.
\end{proposition}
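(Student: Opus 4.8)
The plan is to reduce everything to a single matrix identity relating the actions of $\A$ and $\widehat{A}$ on the data matrix $\Y_m$, and then to contract that identity against the coordinate vector $\eta$. First I would write $y = \Y_m \eta$ with $\eta = (\eta_1,\ldots,\eta_m)^{\msf T}$, so that $\A y = \A\Y_m\eta$ and $\widehat{A}y = \Y_m\X_m^\dagger\Y_m\eta$. Invoking item 1 of Theorem \ref{zd:TM:KRR-summary}, I would identify $\X_m^\dagger\Y_m = \X_m^\dagger\A\X_m = C_m$, the companion matrix of (\ref{zd:eq:AK=KC+R}), so that $\widehat{A}\Y_m = \Y_m C_m$; this is precisely the $\widehat{A}$-invariance of $\mathcal{Y}_m$ noted above.

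The heart of the argument is then to establish the identity
$$
\A\Y_m - \widehat{A}\Y_m = (\A r_{m+1})\, e_m^{\msf T}.
$$
I would verify this column by column. For the first $m-1$ columns, the shift structure of $C_m$ gives $\widehat{A}\Y_m e_j = \Y_m C_m e_j = \Y_m e_{j+1} = \f_{j+2}$, while $\A\Y_m e_j = \A\f_{j+1} = \f_{j+2}$; these agree, so the difference vanishes. For the last column, $\widehat{A}\Y_m e_m = \Y_m c = \sum_{i=1}^m c_i\f_{i+1}$, whereas $\A\Y_m e_m = \A\f_{m+1}$. Using the defining decomposition $\f_{m+1} = \X_m c + r_{m+1}$ from \S\ref{SSS=Krylov-decomposition} and applying $\A$, I obtain $\A\f_{m+1} = \A\X_m c + \A r_{m+1} = \Y_m c + \A r_{m+1}$, so the last column of the difference is exactly $\A r_{m+1}$. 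Since this is the only surviving column, the discrepancy carries the rank-one factor $e_m^{\msf T}$.

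Contracting the matrix identity against $\eta$ on the right then yields $\A y - \widehat{A}y = (\A r_{m+1})(e_m^{\msf T}\eta) = \eta_m\,\A r_{m+1}$, which is the first assertion; the eigenvector statement is the immediate specialization $\widehat{A}y = \lambda y$, giving $\A y - \lambda y = \eta_m\,\A r_{m+1}$. I do not anticipate a genuine obstacle here: the computation is elementary once the companion structure and the projection identity $r_{m+1} = (I - \X_m\X_m^\dagger)\f_{m+1}$ are in hand. The one point requiring care is the bookkeeping that the two operators differ only in their action on $\f_{m+1}$ --- equivalently, that $\f_3,\ldots,\f_{m+1}$ are common to the data of both $\A$ and $\widehat{A}$, while $\A\f_{m+1}=\f_{m+2}$ is the single quantity not recoverable from the snapshots --- since this is exactly what concentrates the whole discrepancy into the last column.
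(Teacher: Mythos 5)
Your proof is correct. The paper in fact states this proposition without any proof, so there is nothing to compare against; your argument --- writing $y=\Y_m\eta$, using $\X_m^\dagger\Y_m=C_m$ to get $\widehat{A}\Y_m=\Y_m C_m$, and verifying the rank-one identity $\A\Y_m-\widehat{A}\Y_m=(\A r_{m+1})e_m^{\msf T}$ column by column via the shift structure of $C_m$ and $\f_{m+1}=\X_m c+r_{m+1}$ --- is exactly the natural direct verification the authors left to the reader. The only point worth making explicit is that the identification $\X_m^\dagger\Y_m=C_m$ (equivalently $\X_m^\dagger\f_{j+1}=e_{j+1}$ for $j<m$ and $c=\X_m^\dagger\f_{m+1}$) uses the standing full-column-rank assumption on $\X_m$ from \S\ref{SSS=Krylov-decomposition}, so that $\X_m^\dagger\X_m=I_m$; your proof implicitly relies on this, as does the proposition itself.
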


Hence, if $\f_1$ is well chosen and $m$ is big enough, then $r_{m+1}$ will be small and (\ref{zd:eq:A-A}) becomes useful, {provided that $\A$ does not amplify the residual}. For general data, such that we only have $\Y_m = \A \X_m$, not much can be said on $\A \Y_m$.


\section{Memory efficient DMD}\label{zd:S=Compressed-DMD} 
 In the case of high resolution numerical simulations the ambient space dimension $n$ (determined by the fineness of the grid) can be in millions, and mere memory fetching and storing is the bottleneck that precludes efficient computation, let alone flop count. 
 For instance, \cite{Sayadi-Schmid-2014}, \cite{Sayadi-Schmid-2016} report computation with the dimension $n$ of half billion on a massively parallel computer. 
  Tu and Rowley \cite{Tu20125317} discuss an implementation of DMD that has been designed to reduce computational work and memory requirements; this approach is also adopted in the library \texttt{modred} \cite{Belson:2014:ACMTOMS-945}.
 Sayadi and Schmid \cite{Sayadi-Schmid-2014} propose a parallelized SVD of $\X_m$, based on the QR factorization,  which improves the performance of the whole DMD since the SVD is its most expensive part. 
 
 In fact, it is a well known technique to preprocess the SVD with a QR factorization,  $\X_m=Q_x\left(\begin{smallmatrix} R_x \cr 0\end{smallmatrix}\right)$,
 and then to compute the SVD of the $m\times m$ upper triangular matrix $R_x$, and finally to assemble the left singular vectors using the unitary factor $Q_x$; see \cite{chan-1982}. The optimal ratio $n/m$ for this strategy depends on a concrete SVD algorithm and on a computing hardware and software. For example,  in the LAPACK \cite{LAPACK} library, the \texttt{xGESVD} and \texttt{xGESDD} subroutines\footnote{Following the LAPACK naming convention, in the subroutine name '\texttt{xGESVD}', \texttt{x} is one of \texttt{S, D, C, Z} for the four data types.} for computing the SVD with the bidiagonalization based QR SVD and the divide and conquer methods, the crossover point is obtained by calling e.g. \texttt{ILAENV( 6, 'xGESVD', JOBU // JOBVT, M, N, 0, 0 )}. The LAPACK's Jacobi SVD subroutines \texttt{xGEJSV} start with the QR factorization as well. 
 In the case $n\gg m$, the advantage is obvious and in particular because the QR factorization of a tall and skinny matrix can be optimized for various computing platforms \cite{Demmel-QR-LU-2012}.
 
 Hence, the modification proposed  in \cite{Sayadi-Schmid-2014} is already contained in the LAPACK SVD subroutines.

\subsection{QR Compressed DMD}\label{zd:SS=Compressed-DMD} 
On the other hand, all action contained in the sequential data\footnote{We first consider sequential data and later generalize to general $\X_m$ and $\Y_m=\A \X_m$.} in (\ref{zd:eq:XY}) at step $i=m$ is confined to at most $m+1$ dimensional range of $\F_{m+1}=(\f_1,\ldots, \f_{m+1})$. Hence, all previously analyzed algorithms can be represented in $\mathrm{range}(\F_{m+1})$, and it only remains to construct a convenient orthonormal basis. To that end, let 
\begin{equation}\label{zd:eq:QRFm+1}
\F_{m+1} = Q_f \begin{pmatrix} R_f \cr 0\end{pmatrix} = \widehat{Q}_f R_f,\;\;Q_f^* Q_f = I_n, 
\end{equation}
be the QR factorization of $\F_{m+1}$: $\widehat{Q}_f=Q_f(:,1:m+1)$, $\mathrm{range}(\widehat{Q}_f)\supseteq \mathrm{range}(\F_{m+1})$, and $R_f$ is $(m+1)\times (m+1)$ upper triangular. (If $\F_{m+1}$ is of full column rank, then $\mathrm{range}(\widehat{Q}_f)= \mathrm{range}(\F_{m+1})$.)

If we set $R_x=R_f(:,1:m)$, $R_y=R_f(:,2:m+1)$, then 
\begin{equation}\label{zd:eq:FXY-compress}
\X_m = \widehat{Q}_f R_x,\;\;\Y_m =\widehat{Q}_f R_y \; ;\;\;
R_f = \left( \begin{smallmatrix} \times & \divideontimes & \divideontimes & \divideontimes & \div \cr & \divideontimes & \divideontimes & \divideontimes & \div \cr 
 &  & \divideontimes & \divideontimes & \div \cr 
 &   &  & \divideontimes & \div \cr
 &   &   &   & \div\end{smallmatrix}\right),\;\;
 R_x = \left( \begin{smallmatrix} 
 \times & \divideontimes & \divideontimes & \divideontimes \cr 
        & \divideontimes & \divideontimes & \divideontimes \cr
        &        & \divideontimes & \divideontimes \cr
        &        &        & \divideontimes \cr
        &        &        &   0
 \end{smallmatrix}\right),\;\;
 R_y = \left(\begin{smallmatrix} 
 \divideontimes & \divideontimes & \divideontimes & \div \cr
 \divideontimes & \divideontimes & \divideontimes & \div \cr
      & \divideontimes & \divideontimes & \div \cr
      &      & \divideontimes & \div \cr
      &      &      & \div
  \end{smallmatrix}\right) ,
\end{equation}
and, in the basis of the columns of $\widehat{Q}_f$, we can identify $\X_m\equiv R_x$, $\Y_m\equiv R_y$, i.e. we can think of $\X_m$ and $\Y_m$ as $m$ snapshots in an $(m+1)$ dimensional space. 
Recall that $m\ll n$.
 
If we apply the DMD algorithm to this data, it will return the matrix of approximate eigenvalues 
$\Lambda_k$ with the corresponding eigenvectors as the columns of $\widehat{Z}_k\in\mathbb{C}^{(m+1)\times k}$. 
To transform this output in terms of the original data, it suffices to lift the eigenvectors as $Z_k = \widehat{Q}_f \widehat{Z}_k$.
Also note that this change of basis is actually an implicit unitary similarity applied to $\A$, since
\begin{equation}\label{eq:QfTAQf}
\X_m = Q_f \begin{pmatrix} R_x \cr 0\end{pmatrix},\;\;
\Y_m = \A  Q_f \begin{pmatrix} R_x \cr 0\end{pmatrix}= Q_f \begin{pmatrix} R_y \cr 0\end{pmatrix} \; \Longrightarrow\; \begin{pmatrix} R_y \cr 0 \end{pmatrix} = (Q_f^* \A Q_f) \begin{pmatrix} R_x \cr 0\end{pmatrix} .
\end{equation}
Note that $R_y = (\widehat{Q}_f^*\A \widehat{Q}_f) R_x$, and, analogously, in the invertible case, $R_x = (\widehat{Q}_f^*\A^{-1} \widehat{Q}_f) R_y$ . 
In the case of the exact DMD with $\widehat{A}=\Y_m\X_m^{\dagger}$ we have
\begin{equation}
\X_m^{\dagger} = \begin{pmatrix} R_x^{\dagger} & 0 \end{pmatrix} Q_f^* = R_x^{\dagger} \widehat{Q}_f^*,\;\;
\widehat{A} = \Y_m\X_m^{\dagger} = Q_f \begin{pmatrix} R_y R_x^{\dagger} & 0 \cr 0 & 0 \end{pmatrix} Q_f^* = 
\widehat{Q}_f R_y R_x^{\dagger} \widehat{Q}_f^* .
\end{equation}   

The benefit of this compressed DMD is that the dimension $n$ is reduced to much smaller dimension $m+1$ using the QR factorization that can be optimized for tall and skinny matrices \cite{Demmel-QR-LU-2012}. 

The cost of this reduction is comparable  to the initial QR factorization of $\X_m$ proposed in  \cite{Sayadi-Schmid-2014}. However, unlike  \cite{Sayadi-Schmid-2014}, where this is considered only a preprocessing step toward more efficient SVD of $\X_m$ in DMD  running in the original $n$-dimensional space, our compressed DMD runs entirely in the $(m+1)$ dimensional subspace of $\mathbb{C}^n$, with the nice interpretation (\ref{eq:QfTAQf}). 

This compression trick applies to Algorithm \ref{zd:ALG:DMD}, Algorithm \ref{zd:ALG:DMD:RRR} as well as to the Exact DMD. It especially greatly improves the efficiency of Algorithm \ref{zd:ALG:DMD:RRR} because it reduces the overhead of computing the refined Ritz vectors.
For the readers convenience, in Algorithm \ref{zd:ALG:DMD:RRR:compressed}, we show the compressed version of Algorithm \ref{zd:ALG:DMD:RRR}; for the other two algorithms, \emph{mutatis mutandis}, the corresponding compressed versions are straightforward.

\begin{algorithm}[hbt]
	\caption{$[Z_k, \Lambda_k, r_k, \rho_k]=\mathrm{DDMD\_RRR\_C}(\F_m; \epsilon)$ \{\emph{QR Compressed Refined DDMD\_RRR}\}}
	\label{zd:ALG:DMD:RRR:compressed}
	\begin{algorithmic}[1]
		\REQUIRE \  \\		
		\begin{itemize} 
			\item $\F_{{m+1}}=(\f_1,\ldots,\f_m,\f_{m+1})$ that defines a sequence of snapshots pairs $\f_{i+1}=\A \f_i$. (Tacit assumption is that $n$ is large and that $m \ll n$.)
			\item Tolerance level $\epsilon$ for numerical rank determination.		
		\end{itemize}
		\STATE $[\widehat{Q}_f,R_f]=qr(\F_{m+1},0)$ ; \COMMENT{\emph{thin QR factorization}}
		\STATE  $R_x=R_f(1:m+1,1:m)$, $R_y=R_f(1:m+1,2:m+1)$ ; \COMMENT{\emph{New representaitons of $\X_m$, $\Y_m$.}}
        \STATE $[\widehat{Z}_k, \Lambda_k, r_k, \rho_k]=\mathrm{DDMD\_RRR}(R_x, R_y; \epsilon)$; \COMMENT{\emph{Algorithm \ref{zd:ALG:DMD:RRR} in $(m+1)$-dimensional ambient space}}
		\STATE $Z_k = \widehat{Q}_f \widehat{Z}_k$ 
		\ENSURE $Z_k$, $\Lambda_k$, $r_k$, $\rho_k$
	\end{algorithmic}
\end{algorithm}

\begin{remark}
In an efficient software implementation, the matrix $\widehat{Q}_f$ is not formed explicitly. Instead, the information on the $m+1$ Householder reflectors used in the factorization is stored in the positions of the annihilated entries (see e.g. \texttt{xGEQRF} in LAPACK) and then one can apply such implicitly stored $Q_f$ and compute 	$Z_k = Q_f \left(\begin{smallmatrix}\widehat{Z}_k \cr 0\end{smallmatrix}\right)\equiv \widehat{Q}_f \widehat{Z}_k $ (see e.g. \texttt{xORMQR} in LAPACK).	
\end{remark}

\begin{remark}
In the case of general data $\X_m$, $\Y_m=\A \X_m$, instead of the QR factorization (\ref{zd:eq:QRFm+1}), we can compress the data onto the $2m$ dimensional subspace using the QR factorization
$$
\begin{pmatrix} \X_m & \Y_m\end{pmatrix} = Q_{xy} \begin{pmatrix} R_{xy}\cr 0\end{pmatrix}  = \widehat{Q}_{xy} R_{xy}.
$$
and analogously to (\ref{zd:eq:FXY-compress}), the low dimensional representations $\X_m = \widehat{Q}_{xy} R_x$, $\Y_m=\widehat{Q}_{xy} R_y$, where $\widehat{Q}_{xy}=Q_{xy}(:,1:2m)$, $R_x = R_{xy}(1:2m,1:m)$, $R_y=R_{xy}(1:2m,m+1:2m)$. 
As in the case of sequential data (see Algorithm \ref{zd:ALG:DMD:RRR:compressed}), the DMD will compute with $R_x$ and $R_y$, and the Ritz vectors will be transformed back to $\mathbb{C}^n$ using $Q_{xy}$. 
\end{remark}

\begin{remark}
		{Using (\ref{zd:eq:QRFm+1}), (\ref{zd:eq:FXY-compress}) as in Algorithm \ref{zd:ALG:DMD:RRR:compressed} facilitates efficient updating/down-dating if we keep adding new snapshots and/or dropping  the ones at the beginning,  e.g. if the snapshots are taken from a sliding (in discrete time steps) window that may even be of variable width. Also, rows (observables) may be added/removed and the decomposition recomputed from the previous one. The key is that only the QR factorization (\ref{zd:eq:QRFm+1}) needs to be updated, using the well established algorithms (see e.g. \cite{Hammarling-Lucas-2008}) that are also available for parallel computing (see e.g. \cite{ANDREW2014161}); the rest of the computation takes place in $(m+1)$-dimensional space. We omit the details for the sake of brevity.}		
\end{remark}

\section{Forward--Backward DMD (F-B DMD)}\label{S=FB-DMD}
Dawson et al. \cite{Dawson2016} (see also \cite[\S 8.3]{dmd-book-kutz-2016}) proposed a Forward--Backward version of DMD designed to reduce the bias caused by sensor noise. The idea is to run (the exact) DMD also backward by swapping $\X_m$ and $\Y_m$, thus working implicitly with $\A^{-1}$.
If the corresponding Rayleigh quotients are $S_k$ and $S_{k,back}$ (for backward DMD) then, assuming nonsingularity of both Rayleigh quotients, the projected operator is approximated by 
$$
\widehat{S}_k=\sqrt{S_k S_{k,back}^{-1}}.
$$ 
The intuition is that the bias in the eigenvalues of $S_k$ and $S_{k,back}$ will be subject to cancellation in the product $S_k S_{k,back}^{-1}$.
Now, if $\widehat{S}_k w_i = \mu_i w_i$, then for every nonzero $\mu_i$, the corresponding approximate eigenvector is $U_r w_i$ (standard DMD) or 
$\mu_i^{-1}\Y_m V_r\Sigma_r^{-1}w_i$ (Exact DMD, preferred in \cite[Algorithm 1, Algorithm 3]{Dawson2016}). Numerical examples provided in  \cite{Dawson2016} and \cite[\S 8.3]{dmd-book-kutz-2016} demonstrate the efficacy of the F-B DMD. However, numerical details of the effective implementation of the method remain open for discussion with a potential for improvement, in particular with respect to the usage of the matrix square root. 


In \cite{Dawson2016}, the problem of computing the matrix square root is discussed in detail, in particular with respect to the non-uniqueness\footnote{A nonsingular matrix $M$ with $d$ distinct eigenvalues has $2^d$ square roots which are matrix functions of $M$.} and it is proposed to resolve the ambiguity by choosing the root closest to $S_k$ (or $S_{k,back}^{-1}$). This may be difficult to implement in practice, as computing the matrix square root is a nontrivial task, and this additional constraint would make it even more difficult. Furthermore, in a software implementation of the DMD, one usually resorts to state of the art software packages, such as e.g. Matlab, and uses black-box routine. For instance,  the function \texttt{sqrtm()} (Matlab) computes the branch of the square root such that all eigenvalues have nonnegative real parts; each real negative eigenvalue of $S_k S_{k,back}^{-1}$ will generally produce in the spectrum of $\sqrt{S_k S_{k,back}^{-1}}$ a  purely imaginary eigenvalue in the upper half plane.
Moreover, the computed square root is in general complex, even when 
$S_k S_{k,back}^{-1}$ is real. (It is known that real matrix possesses a real square root if and only if each of its elementary divisors with real negative eigenvalues  occurs an even number of times \cite[Theorem 5]{HIGHAM1987405}.)

In \cite[Algorithm 8.6]{dmd-book-kutz-2016}, the square root is computed in Matlab as the matrix fractional power, $(S_k S_{k,back}^{-1})^{0.5}$. This function is also numerically difficult way to compute the square root, see e.g. \cite{Higham-Lin-Power}.
For more systematic treatment of matrix square root function we refer to \cite[Chapter 6]{Higham:2008:FM}. 

\subsection{Matrix root free Forward--Backward DMD}
Closer look at the Forward--Backward DMD reveals that, intrinsically, we do not need $\widehat{S}_k=\sqrt{S_k S_{k,back}^{-1}}$, but (only) its spectral decomposition. Hence, the FB-DMD can be implemented without invoking the matrix square root; instead we deploy the spectral mapping theorem.
The result is the a Matrix Root--Free Forward--Backward DMD, outlined in Algorithm \ref{zd:ALG:DMD:F-B-MRF}:
\begin{algorithm}[hbt]
	\caption{$[Z_k, \Lambda_k, r_k]=\mathrm{DMD\_FB\_MRF}(\F_m; \epsilon)$ \{\emph{Matrix--Root--Free Forward--Backward DMD}\}}
	\label{zd:ALG:DMD:F-B-MRF}
	\begin{algorithmic}[1]
		\REQUIRE \  \\		
		\begin{itemize} 
			\item $\X_m=(\x_1,\ldots,\x_m), \Y_m=(\y_1,\ldots,\y_m)\in \mathbb{C}^{n\times m}$ that define a sequence of snapshots pairs $(\x_i,\y_i\equiv \A \x_i)$. (Tacit assumption is that $n$ is large and that $m \ll n$.)
			\item Tolerance level $\epsilon$ for numerical rank determination.		
		\end{itemize}	
		\STATE Apply a DMD algorithm to $(\X_m, \Y_m\equiv\A \X_m)$ and compute the POD basis $U_k$ and the Rayleigh quotient $S_k$.  Use a threshold strategy to determine the reduced dimension $k$.
		\STATE Apply the same DMD scheme to $(\Y_m,\X_m\equiv \A^{-1}\Y_m)$, but with fixed dimension $k$ of the POD basis and compute the Rayleigh quotient $S_{k,back}$.
		\STATE $M_k = S_k S_{k,back}^{-1}$ ; \COMMENT{\emph{Without explicit inverse, e.g. $M_k = S_k\slash S_{k,back}$ (Matlab).}}
		\STATE $[W_k, \Omega_k] = \mathrm{eig}(M_k)$ \COMMENT{$\Omega_k=\mathrm{diag}(\omega_i)_{i=1}^k$; $M_k W_k(:,i)=\omega_i W_k(:,i)$; $\|W_k(:,i)\|_2=1$}
		\STATE $\lambda_i = \pm \sqrt{\omega_i}$, $\Lambda_k = \mathrm{diag}(\lambda_i)_{i=1}^k$. \COMMENT{\emph{Choose the signs carefully; see Remark \ref{zd:REM:sign-lambda}.}}
		\STATE $Z_k = U_k W_k$ \COMMENT{\emph{Ritz vectors. Alternatively, use the vectors from the Exact DMD as in \cite{Dawson2016}.}}
		\STATE $r_k(i) = 
		\| (\Y_m V_k\Sigma_k^{-1}) W_k(:,i) - \lambda_i Z_k(:,i) \|_2$, $i=1,\ldots, k$.
		\ENSURE $Z_k$, $\Lambda_k$, $r_k$
	\end{algorithmic}
\end{algorithm}
 
\begin{remark}\label{zd:REM:sign-lambda}
In Line 5., the complex square root function computes the principal value, $\sqrt{\omega_i}=\sqrt{(|\omega_i|+\Re(\omega_i))/2} \pm \ii \sqrt{(|\omega_i|-\Re(\omega_i))/2}$. Hence $\Re(\sqrt{\omega_i})\geq 0$, $i=1,\ldots, k$. If $M_k$ is real, its complex (non-real) eigenvalues occur in complex conjugate pairs and their square roots will inherit closedness under conjugation. (In Matlab, the sign of the imaginary part of $\sqrt{\omega_i}$ matches the sign of the imaginary part of $\omega_i$. For more details on machine implementation of complex functions see \cite{Kahan-branch-cuts}.) Note however that any real negative $\omega_i$ yields purely imaginary $\lambda_i = \ii \sqrt{-\omega_i}$, and the conjugacy symmetry will be lost.		
One way to choose the sign in the definition $\lambda_i=\pm \sqrt{\omega_i}$ is e.g. by comparison with 
$W_k(:,i)^* S_k W_k(:,i)$, or with the Ritz values of $S_k$.
\end{remark}

\begin{remark}
For more efficient computation, \cite{Dawson2016} proposes optional projection of both $\X_m$ and $\Y_m$ onto the POD basis for $\X_m$, so that the F-B DMD runs in the POD basis.		
An alternative for both the original FB-DMD and Algorithm \ref{zd:ALG:DMD:F-B-MRF} is to use the QR Compressed version of the DMD (\S \ref{zd:SS=Compressed-DMD}), and thus to work with $R_x$ and $R_y$ instead of $\X_m$ and $\Y_m$, respectively. This is an attractive option because of availability of efficient software implementations of the QR factorization of tall and skinny matrices, see e.g. \cite{Demmel-QR-LU-2012}.  Since this corresponds to an orthogonal (or unitary) transformation of the data, the model for the noise in the derivation of the FB-DMD remains valid.
\end{remark}

\section{General framework: weighted DMD}\label{zd:S=General-weighted}
The truncated SVD provides optimal low rank approximation to $\X_m$ by favoring most energetic modes, all in the Euclidean inner product structure that is usually taken as default. This may not be optimal and restricting the Rayleigh-Ritz procedure to only a subspace of $\mathcal{X}_m$ may prevent it from finding low energy but relevant modes. Further, truncating the SVD of $\X_m$ in the canonical Euclidean structure (and deeming the vectors large or small in the Euclidean norm) ignores that the numerical values in a snapshot $\f_i$ represent physical variables such as pressure and velocity components, each in its own units.\footnote{See e.g. \cite{Richecoeur-DMD-combustion-2012}, where separate DMD is computed for each variable.} Clearly, care must be  exercised in computing and interpreting norm of such a vector. Also, if the components of $\f_i$ are measured values contaminated with noise, then statistical information (covariance) may be available, and must be taken into account.

Moreover, dimension reduction by a Galerkin/POD projection in the framework of an inappropriate Hilbert space structure may cause loss of important physical properties (e.g. stability) and some other important principles (e.g. conservation) may get lost in the projection.  
In such cases, it may be crucial to use an appropriate notion of energy i.e. the inner product that is in the discretized space defined by a positive definite matrix $M$ ($(x,y)_M = y^* M x$); recall also that discretization of a continuous inner product defined by an integral yields a weighted inner product defined through the appropriate quadrature weights. For an example of construction of $M$ see e.g. \cite{Satish-et-all-Orr-Sommerfeld}. 
In some cases, the natural inner product structure is defined by the positive definite solution $M$ of a Lyapunov matrix equation, see e.g. \cite[\S 6.1]{ROM-SANDIA-2014}, \cite{Serre20125176}, \cite{Rowley-MRF-2005}.
As an illustration of the importance of the first step we refer to \cite{POD-Sound-Freund}, \cite{POD-Symm-TW}, \cite{Kalashnikova-Arun-2014} for examples how different inner products yield completely different results -- certainly not desirable situation for real world applications. 

If a weighted Galerkin/POD projection is combined with other approximation techniques such as the DEIM
\cite{grepl-maday-2007}, \cite{EIM}, \cite{Maday2009383}, \cite{DEIM} or the QDEIM \cite{drmac-gugercin-DEIM-2016}, then it is important that the overall computation and the error bounds are done in the same (weighted) inner product; for a recent development of this idea in the context of DEIM see \cite{GEIM}, \cite{Drmac-Saibaba-WDEIM}.

Hence, it seems natural and important to formulate the first step in the Schmid's DMD algorithm (as well as in other versions of the DMD) in a general weighted inner product space.

To set the scene, we recall the weighted POD \cite{volkwein-2011-mor}, summarized in Algorithm \ref{zd:ALG:POD}. 
\begin{algorithm}[H]
	\caption{$[\widetilde{U}_k, L, [\widehat{U}_k]]=\mathrm{Weighted\_POD}(\X_m; M; \epsilon)$ (See e.g. \cite{volkwein-2011-mor}.)}
	\label{zd:ALG:POD}
	\begin{algorithmic}[1]
		\REQUIRE \  \\		
		\begin{itemize} 
			\item The matrix of snapshots $\X_m=\begin{pmatrix}\f_1, \f_2, \ldots, \f_m\end{pmatrix} \in\mathbb{C}^{n\times m}$. (Assumed $m < n$.)
			\item Hermitian $n\times n$ positive definite $M$ that defines the inner product.	
			\item Tolerance level $\epsilon$ for numerical rank detection.		
		\end{itemize}
		\STATE Compute the Cholesky decomposition $M = LL^*$. \COMMENT{\emph{Use the definition of $M$ and exploit structure. $M$ can also be diagonal. $M$ can also be given implicitly by providing $L$, which is not necessarily triangular. One can take $L=\sqrt{M}$.}}
		\STATE Compute the thin SVD  $L^* \X_m = \widetilde{U} \Sigma V^*$. \COMMENT{$\Sigma=\mathrm{diag}(\sigma_i)_{i=1}^m$; $\sigma_{1}\geq\cdots\geq\sigma_m$.}
		\STATE Determine the numerical rank $k$ using the threshold $\epsilon$. \COMMENT{\emph{See \S \ref{SSS=Choose-k}.}}
		\ENSURE $\widehat{U}_k \equiv L^{-*} \widetilde{U}_k$, where $\widetilde{U}_k=\widetilde{U}(:,1:k)$. $\widehat{U}_k$ can be returned implicitly by $L$ and $\widetilde{U}_k$.
	\end{algorithmic}
\end{algorithm}
\noindent The computed weighted POD basis $\widehat{U}_k$ is $M$--unitary: $\widehat{U}_k^* M  \widehat{U}_k = I_k$, i.e. its columns are orthonormal set in the inner product $(x,y)_M = y^* Mx$. Any computation built upon the result of Algorithm \ref{zd:ALG:POD} (including the Rayleigh quotient, Ritz pairs and error estimates) should be done in the space $(\mathbb{C}^n,(\cdot,\cdot)_M)$, and the errors should be measured in the corresponding norm $\|\cdot\|_M = \sqrt{(\cdot,\cdot)_M}\equiv \| L^* \cdot\|_2$,  where $L$ is any square matrix such that $LL^*=M$; for an example see e.g.  \cite{Drmac-Saibaba-WDEIM}.
%
Recall that the operator norm induced by the norm $\|\cdot\|_M$ can be computed as $\|\A\|_M = \| L^* \A L^{-*}\|_2$,

\begin{remark}
The optimality property of the low rank approximation $\X_m \approx \widehat{U}_k \Sigma_k V_k^*$ (where $\Sigma_k=\Sigma(1:k,1:k)$, $V_k=V(:,1:k)$) still holds true, but measured in the induced matrix norm 
$\|\X_m\|_{2,M}=\max_{z\neq 0} \|\X_m z\|_M /\|z\|_2 \equiv \|L^* \X_m\|_2 .$ It is easily checked that $\|\X_m\|_{2,M}=\sigma_1$, and that the approximation error $\Delta \X_{m,k}\equiv \X_m - \widehat{U}_k \Sigma_k V_k^*$ is equal to
$
\| \X_m - \widehat{U}_k \Sigma_k V_k^* \|_{2,M} = \sigma_{k+1} .
$
\end{remark}

\subsection{New algorithm}\label{SS=DMD-weighted-new-alg}
We now work out the details of the data driven Rayleigh--Ritz approximation in the weighted inner product.
\begin{proposition}\label{zd:PROP:RQ-weighted} 
Assume that we are given the data $\X_m$ and $\Y_m=\A \X_m$, and that Algorithm \ref{zd:ALG:POD} is applied to $\X_m$.
The $(\cdot,\cdot)_M$--weighted Rayleigh quotient matrix of $\A$ with respect to the POD basis $\widehat{U}_k$ can be computed from the available data as
	\begin{equation}\label{eq:hatSk}
	\widehat{S}_k = \widetilde{U}_k^* L^* \Y_m V_k\Sigma_k^{-1} .
	\end{equation}
If $(\lambda, w)$ is an eigenpair of $\widehat{S}_k$ ($\widehat{S}_k w=\lambda w$, $w\neq 0$) then the corresponding Ritz pair $(\lambda,\widehat{U}_k w)$ of $\A$ has the residual norm computable as
\begin{equation}\label{zd:eq:residual-weighted}
\| \A (\widehat{U}_k w) - \lambda (\widehat{U}_k w) \|_M = \| L^* \Y_m V_k\Sigma_k^{-1} w - \lambda \widetilde{U}_k w\|_2 . 
\end{equation}	
\end{proposition}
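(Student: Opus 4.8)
The plan is to reduce the entire statement to two bookkeeping identities that collapse the $M$-weighting into the transformed (tilde) quantities, namely $\widehat{U}_k^* M = \widetilde{U}_k^* L^*$ and $L^* \widehat{U}_k = \widetilde{U}_k$, both immediate from the output relation $\widehat{U}_k = L^{-*}\widetilde{U}_k$ of Algorithm \ref{zd:ALG:POD} together with $M = LL^*$. Against these I must match a data-driven expression for $\A\widehat{U}_k$ that does not reference $\A$.

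First I would recall that, since $\widehat{U}_k$ is $M$-unitary ($\widehat{U}_k^* M \widehat{U}_k = I_k$, as noted in the preceding remark), the $(\cdot,\cdot)_M$-orthogonal projection onto $\mathrm{range}(\widehat{U}_k)$ is the $M$-self-adjoint idempotent $\widehat{U}_k \widehat{U}_k^* M$, so the weighted Rayleigh quotient, i.e.\ the matrix of $\restr{\A}{\mathrm{range}(\widehat{U}_k)}$ compressed by this projection and expressed in the basis $\widehat{U}_k$, is $\widehat{S}_k = \widehat{U}_k^* M \A \widehat{U}_k$. This is the object I must show equals (\ref{eq:hatSk}). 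The crucial step is then to produce $\A\widehat{U}_k$ from data alone: from the thin SVD $L^*\X_m = \widetilde{U}\Sigma V^*$ computed in Algorithm \ref{zd:ALG:POD} one has the exact relation $L^*\X_m V_k = \widetilde{U}_k\Sigma_k$, whence, multiplying by $L^{-*}\Sigma_k^{-1}$ and using $\widehat{U}_k = L^{-*}\widetilde{U}_k$, we get $\widehat{U}_k = \X_m V_k\Sigma_k^{-1}$ and therefore $\A\widehat{U}_k = \A\X_m V_k\Sigma_k^{-1} = \Y_m V_k\Sigma_k^{-1}$ by $\Y_m = \A\X_m$. This is the $M$-free, purely data-driven surrogate for $\A\widehat{U}_k$.

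Assembling the pieces is then routine. Substituting $\A\widehat{U}_k = \Y_m V_k\Sigma_k^{-1}$ and $\widehat{U}_k^* M = \widetilde{U}_k^* L^*$ into $\widehat{S}_k = \widehat{U}_k^* M \A \widehat{U}_k$ yields $\widehat{S}_k = \widetilde{U}_k^* L^* \Y_m V_k\Sigma_k^{-1}$, which is (\ref{eq:hatSk}). For the residual I would use the defining identity $\|\cdot\|_M = \|L^*\cdot\|_2$, so that $\|\A(\widehat{U}_k w) - \lambda(\widehat{U}_k w)\|_M = \|L^*\A\widehat{U}_k w - \lambda L^*\widehat{U}_k w\|_2$; then $L^*\A\widehat{U}_k = L^*\Y_m V_k\Sigma_k^{-1}$ and $L^*\widehat{U}_k = \widetilde{U}_k$ give exactly (\ref{zd:eq:residual-weighted}).

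I do not expect a genuine obstacle: the content is linear algebra with the weight $M=LL^*$ threaded through consistently. The one point demanding care is the exactness of $\A\widehat{U}_k = \Y_m V_k\Sigma_k^{-1}$ for $k<m$, which rests on $V_k$ spanning the leading right singular subspace of $L^*\X_m$ so that $L^*\X_m V_k = \widetilde{U}_k\Sigma_k$ with no truncation error, exactly the orthogonality argument used in the unweighted Proposition on the structure of the Rayleigh quotient. Everything else is confirming that $\widehat{U}_k\widehat{U}_k^* M$ is the genuine $(\cdot,\cdot)_M$-projection and shuffling $L$, $L^{-*}$ and $M=LL^*$.
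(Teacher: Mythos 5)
Your proof is correct and follows essentially the same route as the paper: both identify the weighted Rayleigh quotient through the $M$-adjoint $\widehat{U}_k^* M = \widetilde{U}_k^* L^*$, establish the exact data-driven identity $\A\widehat{U}_k = \Y_m V_k\Sigma_k^{-1}$ via the orthogonality of the right singular vectors (your $L^*\X_m V_k = \widetilde{U}_k\Sigma_k$ is the same fact as the paper's $\Delta\X_{m,k}V_k\Sigma_k^{-1}=0$), and obtain the residual formula from $\|\cdot\|_M = \|L^*\cdot\|_2$. No gaps.
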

\begin{proof}
Recall that the adjoint of $\widehat{U}_k$ (considered as mapping between the inner product spaces  $(\mathbb{C}^k,(\cdot,\cdot))$ and $(\mathbb{C}^n,(\cdot,\cdot)_M)$) is $\widehat{U}_k^{[*]}=\widehat{U}_k^* M$, where $\widehat{U}_k^*$ is the usual conjugate transpose. Since $\widehat{U}_k^{[*]}\widehat{U}_k=I_k$, 
the Rayleigh quotient is computed as
\begin{equation}\label{zd:eq:Sk[*]}
\widehat{S}_k = \widehat{U}_k^{[*]} \A\widehat{U}_k = \widetilde{U}_k^* L^{*}\A L^{-*}\widetilde{U}_k .
\end{equation}
Finally, one can easily check that 
$$
\widetilde{U}_k^* L^* \Y_m V_k \Sigma_k^{-1} = \widetilde{U}_k^* L^*
\A (\widehat{U}_k \Sigma_k V_k^* + (\X_m - \widehat{U}_k \Sigma_k V_k^*) )V_k \Sigma_k^{-1} = \widetilde{U}_k^* L^{*}\A L^{-*}\widetilde{U}_k \equiv \widehat{S}_k ,
$$
{where we have used $\Delta\X_{m,k} V_k \Sigma_k^{-1} = 0$},
and  $\A \widehat{U}_k = \A (\X_m - \Delta\X_{m,k})V_k\Sigma_k^{-1} = \A \X_m V_k\Sigma_k^{-1} = \Y_m V_k\Sigma_k^{-1}$. 
This completes the proof of both (\ref{eq:hatSk}) and (\ref{zd:eq:residual-weighted}).
\end{proof}


\noindent The resulting weighted version of the DMD algorithm is as follows:\footnote{Note that this modification trivially applies to the Exact DMD \cite[Algorithm 2]{Tu-DMD-Theory-Appl}.}

\begin{algorithm}[H]
	\caption{$[Z_k,\Lambda_k] =\mathrm{Weighted\_DMD}(\X_m, \Y_m; M; \epsilon)$}
	\label{zd:ALG:DMD-weighted}
	\begin{algorithmic}[1]
		\REQUIRE \  \\		
		\begin{itemize} 
			\item $\X_m=(\x_1,\ldots,\x_m), \Y_m=(\y_1,\ldots,\y_m)\in \mathbb{C}^{n\times m}$ that define a sequence of snapshots pairs $(\x_i,\y_i\equiv \A \x_i)$. (Tacit assumption is that $n$ is large and that $m \ll n$.)
			\item Hermitian $n\times n$ positive definite $M$ that defines the inner product.	
			\item Tolerance level $\epsilon$ for numerical rank
		\end{itemize}
		\STATE $[\widetilde{U}_k, L, [\widehat{U}_k]]=\mathrm{Weighted\_POD}(\X_m; M; \epsilon)$ \COMMENT{\emph{Algorithm \ref{zd:ALG:POD}}}
		\STATE $\widehat{S}_k = \widetilde{U}_k^* L^* \Y_m V_k\Sigma_k^{-1}$ \COMMENT{\emph{Rayleigh quotient; see Proposition \ref{zd:PROP:RQ-weighted}.}}
		\STATE $[W_k, \Lambda_k] = \mathrm{eig}(\widehat{S}_k)$ \COMMENT{$\Lambda_k=\mathrm{diag}(\lambda_i)_{i=1}^k$; $\widehat{S}_k W_k(:,i)=\lambda_i W_k(:,i)$; $\|W_k(:,i)\|_2=1$}
		\STATE $Z_k = \widehat{U}_k W_k$ \COMMENT{\emph{Ritz vectors}}
		\ENSURE $Z_k$, $\Lambda_k$
	\end{algorithmic}
\end{algorithm}

\begin{remark}
In the case of sequentially shifted data, computing $L^* \Y_m$ in Line 2. of Algorithm \ref{zd:ALG:DMD-weighted} and in (\ref{zd:eq:residual-weighted}) (if residual norms are wanted as well)  can use the trailing $m-1$ columns of $L^* \X_m$ computed in Line 2. of Algorithm \ref{zd:ALG:POD}. This saves $O(n^2 m)$ floating point operations.
\end{remark}

\begin{remark}
	If, with given $M$, the natural structure is given in the inner product $(x,y)_{M^{-1}}\equiv y^* M^{-1}x$, then the above computation is modified as follows: \emph{(i)} In Line 2. of Algorithm \ref{zd:ALG:POD}, compute the SVD $L^{-1}\X_m = \widetilde{U}\Sigma V^*$; \emph{(ii)} Define $\widehat{U}_k$ as 
	$\widehat{U}_k = L\widetilde{U}_k$ ; \emph{(iii)} In Line 2. of Algorithm \ref{zd:ALG:DMD-weighted} define the Rayleigh quotient as $\widehat{S}_k = \widetilde{U}_k^* L^{-1}\Y_m V_k\Sigma_k^{-1}\; (\equiv \widetilde{U}_k^* L^{-1}\A L \widetilde{U}_k)$; \emph{(iv)} for a Ritz pair $(\lambda,\widehat{U}_k w)$ compute the residual as
	$\| \A (\widehat{U}_k w) - \lambda (\widehat{U}_k w) \|_{M^{-1}} = \| L^{-1} \Y_m V_k\Sigma_k^{-1} w - \lambda \widetilde{U}_k w\|_2$.
	These changes simply reflect the factorization $M^{-1}=L^{-*}L^{-1}$, which is equivalent to $M=LL^*$ (for square $L$).	
\end{remark}
		
\begin{remark}\label{zd:REM:A:M-unitary}
There is another important aspect of allowing more general inner product in the DMD. Think of $\A$ as being a discretized unitary\footnote{or, more generally, normal} operator in a Hilbert space with the inner product $(\cdot,\cdot)_M$, i.e. $\A$ is $M$-unitary: $\A^* M \A = M\equiv LL^*$.	Recall that the $M$-adjoint of $\A$ reads $\A^{[*]_M}=M^{-1}\A^* M$, and that $L^* \A L^{-*}$ (appearing in (\ref{zd:eq:Sk[*]})) is unitary in $(\cdot,\cdot)$. Hence, the weighted formulation automatically includes such general form of unitarity. Of course, all aspects of the more general setting must be adjusted for detailed error and perturbation analysis. For the sake of brevity, we only illustrate this point by providing below a weighted version of the Bauer--Fike theorem.
\end{remark}


\begin{theorem}\label{zd:TM:BF-weighted}
Let $\A = S \mathrm{diag}(\alpha_i)_{i=1}^n S^{-1}$, and let $\lambda$ be an eigenvalue of $\A + \delta\A$ with some perturbation $\delta\A$.
Then
\begin{equation}
\min_{i=1:n} |\alpha_i - \lambda| \leq \sqrt{\mu_2(M)} (\|S\|_M \|S^{-1}\|_M) \|\delta\A\|_M,\;\;\mbox{where}\;\; \mu_2(M)=\min_{{\Delta \in \mathrm{Diag}^+_n}}\kappa_2(\Delta M \Delta ) ,
\end{equation}	
{where $\mathrm{Diag}^+_n$ denotes the set of diagonal, positive definite matrices on $\C^n$.}
If $\A$ is $M$-normal, then $S$ is $M$-unitary and $\kappa_M(S)\equiv \|S\|_M \|S^{-1}\|_M=1$. If $M$ is diagonal, $\mu_2(M)=1$. Further, if $\A$ is nonsingular then
\begin{equation}
\min_{i=1:n} \frac{|\alpha_i - \lambda|}{|\alpha_i|} \leq \sqrt{\mu_2(M)} \kappa_M(S) \|\A^{-1}\delta\A\|_M.
\end{equation}	
\end{theorem}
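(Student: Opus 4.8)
The plan is to reproduce the classical Bauer--Fike chain of reasoning, but to carry it out entirely in the $M$-geometry induced by $M=LL^*$, with operator norm $\|\cdot\|_M=\|L^*\cdot L^{-*}\|_2$. The only genuinely new ingredient is a scaling lemma that controls the $M$-norm of a \emph{diagonal} matrix, compensating for the fact that diagonal matrices are no longer ``absolute'' once the Euclidean geometry is replaced by the $M$-geometry; this is exactly where the factor $\sqrt{\mu_2(M)}$ will enter. Everything else is the standard factor-out-the-shift manipulation, reinterpreted so that the submultiplicativity of $\|\cdot\|_M$ (it is an induced operator norm, hence it dominates the spectral radius) can be invoked.

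First I would establish the scaling lemma: for any diagonal $B=\diag(b_i)$ one has $\|B\|_M\le \sqrt{\mu_2(M)}\,\max_i|b_i|$. To see this, let $\Delta\in\mathrm{Diag}^+_n$ be arbitrary. Since $\Delta$ is real and diagonal it commutes with $B$, and writing $L^*=(\Delta L)^*\Delta^{-1}$, $L^{-*}=\Delta(\Delta L)^{-*}$ gives $L^*BL^{-*}=(\Delta L)^*B(\Delta L)^{-*}$, whence $\|B\|_M=\|(\Delta L)^*B(\Delta L)^{-*}\|_2\le\kappa_2(\Delta L)\,\|B\|_2=\kappa_2(\Delta L)\max_i|b_i|$. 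Because $\Delta M\Delta=(\Delta L)(\Delta L)^*$, we have $\kappa_2(\Delta M\Delta)=\kappa_2(\Delta L)^2$, so minimizing over $\Delta$ yields $\inf_\Delta\kappa_2(\Delta L)=\sqrt{\mu_2(M)}$ and the claimed bound. The two closing remarks of the theorem follow at once: if $M$ is diagonal, $\Delta=M^{-1/2}$ gives $\Delta M\Delta=I$ so $\mu_2(M)=1$; and if $\A$ is $M$-normal then $S$ may be taken $M$-unitary ($S^*MS=M$), so $\|S\|_M=\|S^{-1}\|_M=1$ and $\kappa_M(S)=1$.

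Next I would run the Bauer--Fike argument for the absolute bound. Assume $\lambda$ is not already an eigenvalue of $\A$ (otherwise the left-hand side is $0$ and there is nothing to prove). Conjugating the singular matrix $(\A+\delta\A)-\lambda I$ by $S$ makes $(D-\lambda I)+S^{-1}\delta\A S$ singular, where $D=\diag(\alpha_i)$; factoring out the invertible $(D-\lambda I)$ shows $I+(D-\lambda I)^{-1}S^{-1}\delta\A S$ is singular, so $-1$ is an eigenvalue of $(D-\lambda I)^{-1}S^{-1}\delta\A S$ and its $M$-norm is at least $1$. Submultiplicativity of $\|\cdot\|_M$ then gives $1\le\|(D-\lambda I)^{-1}\|_M\,\|S^{-1}\|_M\,\|\delta\A\|_M\,\|S\|_M$, and applying the scaling lemma to the diagonal matrix $(D-\lambda I)^{-1}$, whose largest entry modulus is $1/\min_i|\alpha_i-\lambda|$, yields $\|(D-\lambda I)^{-1}\|_M\le\sqrt{\mu_2(M)}/\min_i|\alpha_i-\lambda|$. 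Rearranging gives the first inequality.

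Finally, the relative bound follows by the same scheme with a multiplicative shift. Writing $(D-\lambda I)+S^{-1}\delta\A S=D\bigl(I-\lambda D^{-1}+D^{-1}S^{-1}\delta\A S\bigr)$ and using $D^{-1}S^{-1}\delta\A S=S^{-1}(\A^{-1}\delta\A)S$ (since $\A^{-1}=SD^{-1}S^{-1}$), singularity forces $I+(I-\lambda D^{-1})^{-1}S^{-1}(\A^{-1}\delta\A)S$ to be singular; as before $1\le\|(I-\lambda D^{-1})^{-1}\|_M\,\kappa_M(S)\,\|\A^{-1}\delta\A\|_M$. The diagonal matrix $(I-\lambda D^{-1})^{-1}$ has entries $\alpha_i/(\alpha_i-\lambda)$, with largest modulus $1/\min_i(|\alpha_i-\lambda|/|\alpha_i|)$, so the scaling lemma finishes the estimate. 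I expect the scaling lemma --- specifically, the clean identification $\inf_\Delta\kappa_2(\Delta L)=\sqrt{\mu_2(M)}$ together with the observation that the real positive diagonal $\Delta$ commutes past $B$ --- to be the only delicate point; the rest is routine once $\|\cdot\|_M$ is recognized as an operator norm.
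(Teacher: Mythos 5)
Your proof is correct and follows essentially the same route as the paper's: the classical Bauer--Fike singularity argument carried out in the $M$-norm, with a positive diagonal $\Delta$ commuted past the diagonal resolvent to produce the factor $\sqrt{\mu_2(M)}$ via $\kappa_2(\Delta L)^2=\kappa_2(\Delta M\Delta)$, and the Eisenstat--Ipsen multiplicative factoring for the relative bound. The only differences are organizational: you isolate the scaling step as a standalone lemma ($\|B\|_M\le\sqrt{\mu_2(M)}\max_i|b_i|$ for diagonal $B$) and you write out explicitly the Eisenstat--Ipsen trick, which the paper merely cites.
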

\begin{proof}
As in the classical proof of the Bauer--Fike theorem (see {\cite[Theorem 1.6]{ste-sun-90}}), 
we conclude that in the nontrivial case ($\lambda$ not an eigenvalue of $\A$) the matrix
$
I_n + (\mathrm{diag}(\alpha_i)_{i=1}^n - \lambda I_n)^{-1}S^{-1} \delta\A  S
$
must be singular. Hence
$$
1 \leq \| (\mathrm{diag}(\alpha_i)_{i=1}^n - \lambda I_n)^{-1}S^{-1}\delta\A S \|_M \leq \| L^* (\mathrm{diag}(\alpha_i)_{i=1}^n - \lambda I_n)^{-1} L^{-*} \|_2 \| S^{-1}\delta\A S\|_M .
$$
Take any diagonal positive definite $\Delta$ and write $\Delta M \Delta =M_s$, $L_s = \Delta L$ (thus $M_s = L_s L_s^{*}$) and 
$$
1 \leq \|L_s\|_2 \|L_s^{-1}\|_2 \frac{1}{\min_{i=1:n}|\alpha_i-\lambda|} \|S^{-1}\delta\A S\|_M \leq \frac{\sqrt{\kappa_2(M_s)}}{\min_{i=1:n}|\alpha_i-\lambda|} 
\|S^{-1}\|_M \|S\|_M \|\delta\A \|_M . 
$$ 
Since this holds for an arbitrary $\Delta\in\mathrm{Diag}^+_n$, the condition number of $M$ enters the error bounds as $\mu_2(M)$, which is potentially much smaller that $\kappa_2(M)$. (For instance, if $M$ is diagonal with arbitrarily high $\kappa_2(M)$, the value of $\mu_2(M)$ is one.)
The second assertion follows from the first one by virtue of the  nice trick from \cite[\S 2]{eis-ipsen-1998}. 
\end{proof}

In the application to eigenvalue error estimates in DMD, the perturbation $\delta\A$ is the residual which should be measured using Proposition \ref{zd:PROP:RQ-weighted}.

Let us also mention that in the case of heavily weighted data (e.g. strongly graded row norms in $\X_m$) it is advised that the SVD and also the QR factorization (such as e.g. in Algorithm \ref{zd:ALG:DMD:RRR:compressed}) are computed more carefully, using row pivoting scheme, see \cite{pow-rei-68}, \cite{drm-ves-VW-1}, \cite{drm-ves-VW-2}.

\subsection{More general weighting schemes}
In Line 2 of Algorithm \ref{zd:ALG:POD}, $\X_m = \widehat{U} \Sigma V^*$ is an SVD of $\X_m$, where $\widehat{U}\equiv L^{-*}\widetilde{U}$ is $M$-unitary and $V$ is unitary (in the canonical Euclidean inner product). We can think of situations when one would want to weigh the snapshots throughout time steps as well. For instance, in a POD analysis of flames in the internal combustion engine \cite{POD-weighted-flames-2007}, the weighting is determined as a function of in-cylinder pressure and used to define weighted average for particular crank angle. As another example, we recall \S \ref{S=New-DMD-RRR}, where we used columns scaling of the data (snapshots) as  numerical device to curb ill-conditioning. 

To define a general setting, in addition to the $M$--induced structure discussed in \S \ref{SS=DMD-weighted-new-alg}, equip $\C^m$ with a weighted inner product $(\cdot,\cdot)_N$ generated by a positive definite matrix $N=KK^*$, or by its inverse $N^{-1}$. (For instance, $N$ could be diagonal with forgetting factors that put less weight on older snapshots, or to represent some other changes in time dynamics.) 

With the two norms induced by $M$ and $N$, most appropriate is the weighted SVD,  introduced by van Loan \cite{van-Loan-GSVD} and successfully deployed in \cite{ewe-luk-89} for solving various weighted least squares problems.
In this framework, Line 2 of Algorithm \ref{zd:ALG:POD} should be changed to computing the SVD 
\begin{equation}
L^* \X_m K^{-*} = \widetilde{U} \Sigma \widetilde{V}^*,\;\;\Sigma=\mathrm{diag}(\sigma_i)_{i=1}^m,\; \sigma_{1}\geq\cdots\geq\sigma_m ; \;\; \widetilde{U}^*\widetilde{U} = \widetilde{V}^*\widetilde{V}=I_m,
\end{equation}
which yields the weighted SVD $\X_m = \widehat{U} \Sigma \widehat{V}^{-1}$  with $M$-unitary $\widehat{U}=L^{-*}\widetilde{U}$ and $N$-unitary $\widehat{V}=K^{-*}\widetilde{V}$.
Recall that $\|\X_m\|_{N,M}=\max_{z\neq 0} \|\X_m z\|_M /\|z\|_N \equiv \|L^* \X_m K^{-*}\|_2=\sigma_1$. The $\|\cdot\|_{N,M}$ error of the best rank $k$ approximation $\widehat{U}_k\Sigma_k \widehat{V}_k^{-1}$ is $\sigma_{k+1}$. We can easily check that the Rayleigh quotient and the residual norm of a Ritz pair $(\lambda, \widehat{U}_k w)$ read, respectively, 
\begin{equation}\label{zd:eq:S_k-L*-K-*}
\widehat{S}_k = \widetilde{U}_k^* L^*\Y_m K^{-*}\widetilde{V}_k\Sigma_k^{-1},\;\;\| \A (\widehat{U}_k w) - \lambda (\widehat{U}_k w) \|_M = \| L^* \Y_m K^{-*} V_k\Sigma_k^{-1} w - \lambda \widetilde{U}_k w\|_2 .
\end{equation}
It is obvious how to adapt  the above formulas if the inner products are generated by $M^{-1}$ and $N^{-1}$. 
Also, the refinement procedure from \S \ref{zd:SS=refined-vectors} applies here as well,  with the difference that the norm is different and thus the minimal singular value of a product of three matrices is needed. The compressed DMD from \S \ref{zd:SS=Compressed-DMD} can be adapted to this general setting with necessary changes; e.g. the QR factorization (\ref{zd:eq:QRFm+1}) should be computed using $M$-unitary $Q_f$.

In all cases discussed above, we need the SVD of the product of three matrices that can be computed as in \cite{ewe-luk-89}, \cite{boj-etal-91}, and \cite{Drmac-triplet-SVD}. 

\section{Concluding remarks}
We have presented modifications of the DMD algorithm, together with theoretical analysis and justification, discussion of the potential weaknesses of the original method, and examples that illustrate the advantages of the new proposed method. From the point of view of numerical linear algebra, the deployed techniques are not new; however, the novelty is in adapting them to the data driven setting and turning the DMD into a more powerful tool. Also, we provide the fine and sometimes mission critical details of numerical software development.

  Moreover, we have defined a more general framework for the DMD, allowing formulations in weighted spaces, i.e. in elliptic norms defined by positive definite matrices. We hope that these modifications and new ideas will trigger further development of data driven methods for spectral analysis. 

Given the importance of DMD in computational analysis of various phenomena in applied sciences and engineering, we believe that our work will also facilitate more advanced and robust computational studies of dynamical systems, in particular in computational fluid mechanics.
For first encouraging applications of the new method see \cite{Crnjaric-zic:201708.002}, \cite{Crnkovic:201708.001}.  
\section*{Acknowledgments}
This  research  is supported  by  the  DARPA  Contract  HR0011-16-C-0116 \emph{``On a Data-Driven, Operator-Theoretic Framework for Space-Time Analysis of Process Dynamics''} and the ARL Contract W911NF-16-2-0160 \emph{``Numerical Algorithms for Koopman Mode Decomposition and Application in Channel Flow''}.

%

%

\printbibliography

\end{document}